\documentclass{hmjart}
\usepackage{amssymb}
\usepackage{amsmath, amsthm, amsfonts}
\usepackage{graphicx}
\usepackage{color}
\usepackage{latexsym}
\usepackage{comment}
\setcounter{page}{1}            
\begin{document}
\hmjlogo{}{}{}{}                

%
%
%
%
\theoremstyle{plain}
\newtheorem{theorem}{Theorem}[section]
\newtheorem{remark}[theorem]{Remark}
\newtheorem{proposition}[theorem]{Proposition}
\newtheorem{lemma}[theorem]{Lemma}
\newtheorem{corollary}[theorem]{Corollary}
\newtheorem{assumption}[theorem]{Assumption}
\theoremstyle{definition}
\newtheorem*{proof}{Proof}
\newtheorem{definition}[theorem]{Definition}
%
%
\numberwithin{equation}{section}
\numberwithin{table}{section}
\numberwithin{figure}{section}

\newtheorem{axiom}{Axiom}
\newtheorem{example}{Example}

\newcommand{\sL}{*L}
\newcommand{\sR}{*R}
\newcommand{\iso}{=}
\newcommand{\eq}{\equiv}
\newcommand{\oL}{\mathcal{L}}
\newcommand{\oR}{\mathcal{R}}
\newcommand{\oP}{\mathcal{P}}
\newcommand{\oN}{\mathcal{N}}
\newcommand{\fzz}{\mid \mid}
\newcommand{\conj}[1]{\overline{#1}}
\newcommand{\kome}{\maltese}
\newcommand{\stap}{\bigstar}

\newcommand{\Su}[1]{\textcolor{red}{末續：#1}}
\newcommand{\In}[1]{\textcolor{blue}{稲津：#1}}

\newcommand{\LR}{\mathcal{L}\mathcal{R}}
%
%
\title{$\LR-$Ending partisan rulesets}
\date{\today}            


\subjclass{Primary 91A46 ; Secondary 20Mxx, 13P10}
\keywords{Combinatorial game theory, Ending partisan, game value, monoid}

\author[H.I]{Hiroki Inazu}
\address{Graduate School of Advanced Science and Engineering\\
  Hiroshima University\\
  1-3-1, Kagamiyama, Higashi-Hiroshima-shi, 739-8526, Japan} 
\email{d220585@hiroshima-u.ac.jp}
 \curraddr{
   Mathematics Program\\
Graduate School of Advanced Science and Engineering\\
Hiroshima University}

\author[S.K]{Shun-ichi Kimura}
\address{Graduate School of Advanced Science and Engineering\\
  Hiroshima University\\
  1-3-1, Kagamiyama, Higashi-Hiroshima-shi, 739-8526, Japan} 
\email{skimura@hiroshima-u.ac.jp}
\curraddr{Mathematics Program\\
Graduate School of Advanced Science and Engineering\\
  Hiroshima University}
\grants{The second author
  is supported by JSPS Kakenhi 23K03071}

\author[K.S]{Koki Suetsugu}
\address{Osaka Metropolitan University, Waseda University, Toyo University}
\email{suetsugu.koki@gmail.com}
\grants{}

\begin{abstract}
In this paper, we consider $\LR$-ending partisan rulesets as a branch of combinatorial game theory. In these rulesets, the sets of options of both players are the same. However, there are two kinds of terminal positions. If the game ends in one of the terminal positions, then a player wins and if the game ends in the other terminal position, the other player wins. We introduce notations for positions in $\LR$-ending partisan rulesets and show their algebraic structures. We also introduce some examples of $\LR$-ending partisan rulesets and show how our results can be used for analyzing the rulesets.
\end{abstract}

\maketitle

\section{Introduction}
\label{sec:intro}
Combinatorial game theory studies algebraic structures in two-player games with no chance moves nor hidden information. Traditionally, impartial games (games in which the sets of options for both players are the same) and partisan games (games in which the sets of options for both players can be different) under the normal play convention (the player who moves last is the winner) are studied.

However, in addition to them, recently, games under various conventions are investigated. 
For example, 
mis\`{e}re games studied in \cite{Pl05} and guaranteed scoring games studied in \cite{LNNS16}.
This paper also introduces a new convention and shows algebraic structures of positions under the convention.


In this paper, we consider {\em $\LR$-ending partisan rulesets}. That is, in every position, the sets of options for both players are the same, and the winner is determined by the kinds of terminal positions.

$\LR$-ending partisan version of {\sc subtraction nim} with subtraction set $S = \{2, 5\}$ is one of the first $\LR$-ending partisan rulesets studied in \cite{HIp, HI}.
In this ruleset, there are two players who are called Left (female) and Right (male). Positions are given as some piles of tokens. Each player, in their turn, chooses one of the piles and removes $2$ or $5$ tokens. If the number of tokens in the pile is one or zero, players cannot choose the pile.
When we play this game under the normal play convention, the player who moves last becomes the winner. However, in $\LR$-ending partisan convention, the winner is determined in a different way: If the total number of remaining tokens is even, then Left is the winner; otherwise, 
Right is the winner.
This can be interpreted as there are two kinds of terminal positions; one is a terminal position for player Left, and the other is for player Right. In addition, the disjunctive sum of two same kind of terminal positions is a terminal position for Left, and the disjunctive sum of two different kinds of terminal positions is a terminal position for Right. 

Let $(a_1, \ldots, a_n)$ be a position such that in $i$-th pile there are $a_i$ tokens.
The following is an example of playing this game:  
$$(6, 10, 2) \xrightarrow{\text{Left}} (6, 5, 2) \xrightarrow{\text{Right}}(6, 5, 0) \xrightarrow{\text{Left}} (6, 3, 0) \xrightarrow{\text{Right}} (1, 3, 0) \xrightarrow{\text{Left}}(1, 1, 0)$$
In the first move, Left removes five tokens from the second pile. 
When the game proceeds as described above, it ends at position (1,1,0) since from every pile no one can remove two or five tokens. Then, the total number of remaining tokens is two, which is an even number, and Left becomes the winner.
Since such games do not fit within the frameworks examined, we introduce a new one in this paper.

The organization of this paper is as follows.
In Section \ref{sec:def}, we introduce a notation for positions in $\LR$-ending partisan rulesets and prove basic theorems. In Section \ref{sec:values}, we discuss some values and their structures. Furthermore, in Section \ref{sec:examples}, we show some rulesets
which can be analyzed by results in Section \ref{sec:values}.
Finally, in Section \ref{sec:open}, we show some open problems.

\section{Basic Definitions and Lemmas}
\label{sec:def}

In this section, we introduce the basic concepts in $\LR$-ending partisan rulesets and prove several theorems that will be used in the subsequent sections.
The concept of $\LR$-ending partisan rulesets is new in combinatorial games, but there are similar notions
to classical games and they are given the same names as in classical 
setting. For the classical theories of combinatorial game theory, see \cite{CGT}.

In classical studies of combinatorial game theory, there are two famous winning conventions. One is the normal play convention and the other is the mis\`{e}re play convention. In normal (resp. mis\`{e}re) play convention, the player who moves last is the winner (resp. loser).
Therefore, there is only one kind of terminal position and the winner depends on which player makes the final move.
However, we can consider another convention. For example, there are two kinds of terminal positions and the winner depends on which kind of terminal positions the final move reaches as shown in Section \ref{sec:intro}.

A ruleset is {\em ending partisan ruleset} if the options of positions for both players are the same.
Impartial rulesets are special cases of ending partisan rulesets. However, not only them, but also we can consider many kinds of ending partisan rulesets. An example is {\em $\LR$-ending partisan rulesets}. In $\LR$-ending partisan rulesets, there are two kinds of terminal positions. If the game ends in one of them, Left becomes the winner, and if the game ends in the other terminal position, Right becomes the winner. Moreover, for the disjunctive sum, the parity is very important. That is, if there are an even number of Right's terminal positions, then the winner is Left, and if there are an odd number of Right's terminal positions, then the winner is Right.

\subsection{Definition of positions}
In $\LR$-ending partisan rulesets, similar to normal or mis\`{e}re rulesets, the positions are defined as follows: 

\begin{definition} \mbox{}
\label{def:positions}
    \begin{itemize}
        \item $\sL$ and $\sR$ are (terminal) positions, in which Left and Right are the winners, respectively.
        \item For $n > 0$, if $G_1, \ldots, G_n$ are positions, then $\{G_1, \ldots, G_n\}$ is also a position.
    \end{itemize}
    
\end{definition}
In the latter case, $G_i$ is called an {\em option} of $\{G_1, \ldots, G_n\}$.

\begin{definition} \mbox{}
\label{def:isomorphism}
Let $G$ and $H$ be positions in $\LR$-ending partisan rulesets. Then, we recursively define the {\em isomorphism} as follows:

$G$ is isomorphic to $\sL$ if and only if $G$ is a terminal position and Left wins. Similarly, $G$ is isomorphic to $\sR$ if and only if $G$ is a terminal position and Right wins.

When 
$G$ has options $G_1, \dots, G_n$ and $H$ has options $H_1, \dots, H_m$, then $G$ is isomorphic to $H$ if and only if $n = m$ and there is a bijection $f: \{G_1, \dots, G_n\} \to \{H_1, \dots, H_m\}$ such that $G_i$ is isomorphic to $f(G_i)$.
    When $G$ is isomorphic to $H$, we denote it as $G \iso H$.
\end{definition}

If the game trees of two positions are the same and there exists a one-to-one correspondence between their terminal positions that preserves their kinds, the two positions are said to be {\em isomorphic} and the symbol $\iso$ is used for denoting the isomorphism.

Similarly to classical combinatorial game theory, we say the {\em birthday} of a position is the height of the game tree of the position. The birthday of a position $G$ is denoted as $b(G)$.
\begin{definition}[Birthday]
    $b(\sL) = b(\sR) = 0$.

    For any position $G \iso \{G_1, \ldots, G_n\}, b(G) = \max(\{b(G_i)\}) + 1$. 
\end{definition}

A position is {\em born on day $n$} if $b(G) = n$. Also, a position is {\em born by day $n$} if $b(G) \leq n$. Note that we assume that every position is short, that is, for every position $G$, there is a positive integer $n$ such that $b(G) = n$. We also note that  ``induction on $G$'' means induction on the birthday of $G$.

\subsection{Disjunctive sum}
Next, we introduce the disjunctive sum of positions in $\LR$-ending partisan rulesets. 

For the base cases of the recursive definition of the disjunctive sum, we introduce the following axiom.

\begin{axiom}\mbox{}
\label{axm:base}
        $$\sL + \sL \iso \sR + \sR \iso \sL.$$
        $$\sL + \sR \iso \sR + \sL \iso \sR.$$
\end{axiom}

\begin{definition}
    Let $G$ and $H$ be positions in $\LR$-ending partisan rulesets and we assume that at least one of $G$ or $H$ is not a terminal position. 
    Then, the disjunctive sum of $G$ and $H$ is denoted by $G + H$
    and is defined recursively as follows:
    
    If $G \iso \{G_1, \ldots, G_n\}, H \iso \{H_1, \ldots, H_m\}$, then
    $$G + H = \{G_1 + H, \ldots, G_n + H, G + H_1, \ldots, G + H_m \}.$$
\end{definition}


Note that if either $G$ or $H$ is a terminal position, then corresponding options do not appear. For example, when $G = \sL$ or $\sR$, we write $G = \{\}$ (empty set) in the notation above.

\begin{proposition}
    For any positions $G, H$ and $J$, the following hold:
    $$G + H \iso H + G.$$
    $$ (G + H) + J \iso G + (H + J).$$
\end{proposition}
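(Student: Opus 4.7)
The plan is to prove both identities simultaneously by induction on the total birthday: $b(G)+b(H)$ for commutativity and $b(G)+b(H)+b(J)$ for associativity. Throughout, I adopt the convention noted after the definition of the disjunctive sum, namely that a terminal position is written with an empty option list, so that the recursive formula for $G+H$ applies uniformly in all cases except the single base case where both summands are terminal, which is governed by Axiom \ref{axm:base}.

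For commutativity, the base case $b(G)=b(H)=0$ reduces to checking the four pairs $\sL+\sL$, $\sL+\sR$, $\sR+\sL$, $\sR+\sR$ directly against the axiom, which is a finite verification. For the inductive step, writing $G \iso \{G_1, \ldots, G_n\}$ and $H \iso \{H_1, \ldots, H_m\}$, the recursive definition gives
$$G+H \iso \{G_1+H, \ldots, G_n+H,\; G+H_1, \ldots, G+H_m\},$$
and the symmetric expression for $H+G$. Each option sum, $G_i+H$ or $G+H_j$, has strictly smaller total birthday than $G+H$, so by the inductive hypothesis $G_i+H \iso H+G_i$ and $G+H_j \iso H_j+G$, producing the obvious bijection between the two option lists.

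For associativity, the base case comprises the eight triples of terminal positions: in each, both $(X+Y)+Z$ and $X+(Y+Z)$ collapse via Axiom \ref{axm:base} to the same terminal position, determined by the parity of the number of $\sR$'s among $X, Y, Z$. In the inductive step, at least one of $G, H, J$ is non-terminal. Expanding both $(G+H)+J$ and $G+(H+J)$ by the recursive formula, each side becomes a list of options of three kinds corresponding to moves made in $G$, $H$, or $J$. For instance, $(G+H)+J$ has options of the form $(G_i+H)+J$, $(G+H_j)+J$, and $(G+H)+J_k$, while $G+(H+J)$ has options $G_i+(H+J)$, $G+(H_j+J)$, and $G+(H+J_k)$. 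Each such option is an associator with strictly smaller total birthday, so the inductive hypothesis identifies the corresponding pairs term by term.

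The main obstacle is the bookkeeping in the associativity step when one or more of $G, H, J$ is terminal, because an inner subsum may collapse via Axiom \ref{axm:base} into a terminal and thereby change which clause of the definition applies at the next level. The empty-option-list convention mitigates this by making the corresponding option groups vanish symmetrically on both sides; the only subtlety is verifying, for example, that when $G$ and $H$ are terminal so that $G+H$ reduces to $\sL$ or $\sR$ by the axiom, the resulting options $(G+H)+J_k$ still match $G+(H+J_k)$ via a strictly smaller instance of associativity. This is a routine but slightly fiddly case analysis on which summand is terminal, and the induction on total birthday is exactly what makes it go through.
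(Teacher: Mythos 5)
Your proof is correct and follows essentially the same route as the paper's: induction on the sum of the birthdays, with the base case handled by Axiom \ref{axm:base} and the inductive step matching options termwise via the induction hypothesis under the empty-option-list convention for terminal positions. Your explicit treatment of the mixed terminal/non-terminal cases is slightly more careful than the paper's write-up, but it is the same argument.
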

\begin{proof}
Let $G \iso \{G_1, \ldots, G_n\}, H \iso \{H_1, \ldots, H_m\},$ and $J \iso \{J_1, \ldots, J_\ell\}$.
We prove this by induction on the sum of the birthdays of $G, H, $ and $J$.
If $G, H, $ and $J$ are terminal positions, both commutativity and associativity are obvious from Axiom \ref{axm:base}.

Assume that $b(G) + b(H) > 0$.
Then, from the induction hypothesis, 
\begin{eqnarray*}
    G + H &\iso& \{G_1 + H, \ldots, G_n + H, G + H_1, \ldots, G + H_m \} \\ 
    &\iso& \{H + G_1, \ldots, H+G_n, H_1 + G, \ldots, H_m + G\} \\ &\iso&
    H + G.
\end{eqnarray*}

Assume that $b(G) + b(H) + b(J) > 0$. Then, from the induction hypothesis, 

\begin{eqnarray*}
(G + H) + J  &\iso& \{G_1 + H, \ldots, G_n + H, G + H_1, \ldots, G + H_m \} + J \\ 
&\iso& \{(G_1 + H) + J, \ldots, (G_n + H) +J, (G+ H_1) + J, \ldots, (G + H_m) + J, \\
&&(G + H) + J_1, \ldots, (G +H) + J_\ell\} \\
&\iso& \{G_1 + (H + J), \ldots, G_n + (H +J), G+ (H_1 + J), \ldots, G + (H_m + J), \\
&&G + (H + J_1), \ldots, G +(H + J_\ell)\} \\
&\iso& G + \{H_1 + J, \ldots, H_m + J, H + J_1, \ldots, H + J_\ell \} \\
&\iso& G+ (H+ J).
\end{eqnarray*}

\qed
\end{proof}

\begin{remark}
    By generalizing $\LR$-ending partisan rulesets, we can construct games with non-commutative, or even non-associative disjunctive sums, even within the framework of ending partisan rulesets. Details will be published elsewhere.
\end{remark}

\subsection{Outcome classes}
We show that the set of positions can be separated into four sets depending on which player has a winning strategy.

\begin{definition}[Outcome classes] \mbox{}
Let $G$ be a position. Then, $G$ is exactly one of the four possibilities below:

  \begin{enumerate}
    \item[$(\oL)$] starting from $G$, regardless of who is the first player, Left has a winning strategy.
    \item[$(\oR)$] starting from $G$, regardless of who is the first player, Right has a winning strategy.
    \item[$(\oN)$] starting from $G$, regardless of who is the first player, the first player (or the Next player) has a winning strategy.
    \item[$(\oP)$] starting from $G$, regardless of who is the first player, the second player (or the Previous player) has a winning strategy.
  \end{enumerate}

 We write $o(G)$ as one of $\oL, \oR, \oN, \oP$ accordingly and call it the {\em outcome} of $G$.    
 The partial order of outcomes is $\oL > \oP > \oR, \oL > \oN > \oR, \oP \fzz \oN$. Here, $\oP \fzz \oN$ means that $\oP$ and $\oN$ are incomparable.
\end{definition}

For any position, the name of its outcome corresponds to the player who has a winning strategy in the position.


\begin{theorem}\mbox{}
\label{thm:outcome}
\begin{itemize}
    \item $\sL \in \oL, \sR \in \oR$. 
    \item Consider a position $G \iso \{G_1, \ldots, G_n\}$,
        \begin{itemize}
            \item There exist $G_i$ and $G_j$ such that $G_i \in \oL \cup \oP$ and $G_j \in \oR \cup \oP$ if and only if $G \in \oN$.
            \item There exists $G_i $ such that $G_i \in \oL$ and for any $G_j, G_j \in \oL \cup \oN$ if and only if $G \in \oL$.
            \item There exists $G_i$ such that $G_i \in \oR$ and for any $G_j$, $G_j \in \oR \cup \oN$ if and only if $G \in \oR$. 
            \item For any $G_i$, $G_i \in \oN$ if and only if $G \in \oP$. 
        \end{itemize}
\end{itemize}
\end{theorem}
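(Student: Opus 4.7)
The plan is to prove the theorem by induction on the birthday $b(G)$. The base case is immediate: if $b(G) = 0$, then $G \iso \sL$ or $G \iso \sR$, and there is no move to make, so by definition Left wins in $\sL$ regardless of whose turn it would be (giving $\sL \in \oL$) and symmetrically $\sR \in \oR$. For the inductive step, fix a non-terminal $G \iso \{G_1, \ldots, G_n\}$ and assume every option has an outcome satisfying the stated characterization. I will establish both halves of the theorem simultaneously by showing that exactly one of the four conditions on the options $G_i$ holds and that each such condition forces the corresponding outcome for $G$.

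The key preliminary is a partition argument. Classify the option multiset by two yes/no flags: (i) does some $G_i$ lie in $\oL \cup \oP$, and (ii) does some $G_j$ lie in $\oR \cup \oP$? By the induction hypothesis each option sits in exactly one outcome class, so these flags are well-defined. If both flags are yes, this is precisely the $\oN$ condition. If flag (ii) is no, then no option is in $\oR \cup \oP$, hence every option lies in $\oL \cup \oN$; combined with flag (i) being yes (and noting that $\oP$ is excluded), some option must actually lie in $\oL$, which is the $\oL$ condition. The $\oR$ case is symmetric, and if both flags are no then every option lies in $\oN$, giving the $\oP$ condition. Hence the four conditions are mutually exclusive and exhaustive.

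For each condition I would exhibit explicit strategies using the induction hypothesis. Under the $\oN$ condition, Left as first player moves to an option in $\oL \cup \oP$, which by induction is a position where Left wins moving second; symmetrically Right has a winning first move. Under the $\oL$ condition, Left as first player moves to a $G_i \in \oL$ and wins; as second player, Right's move lands in some $G_j \in \oL \cup \oN$, and Left (now the next mover there) wins by induction. The $\oR$ case is symmetric. Under the $\oP$ condition, every first move by either player leads to an option in $\oN$, where the opponent is the next player and wins. Once these forward implications are combined with exclusivity and exhaustivity, the reverse implications follow automatically: if $G$ lies in a given outcome class, then the unique matching condition on options must hold.

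The only real bookkeeping subtlety is keeping straight the role-switching between "first player in $G$" and "first player in $G_i$" (which are opposite), and distinguishing the strict memberships $\oL, \oR$ that witness an active winning move from the inclusive memberships $\oL \cup \oP$ and $\oL \cup \oN$ that describe which options are, respectively, reactively winning or not reactively losing. Beyond that, no substantive obstacle arises: the partition structure and the inductive strategy arguments mirror the classical partisan outcome analysis very closely.
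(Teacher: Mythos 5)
Your proposal is correct and follows essentially the same route as the paper: induction on the birthday, explicit winning strategies for each of the four option-conditions, and then the converse implications obtained from mutual exclusivity. The only difference is that you make the exhaustivity and disjointness of the four option-conditions explicit via the two-flag partition, where the paper compresses this into the remark that the outcome classes are pairwise disjoint; your version is slightly more careful but not a different argument.
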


\begin{proof}
    We proceed by induction on $G$.    
    If $G = \sL$, then it is a terminal position and Left wins. Thus, $\sL \in \oL$. Similarly, $\sR \in \oR$.    
    
    We assume that $G \iso \{G_1, \ldots, G_n\}$ and that each $G_i$ satisfies the statement.     
    
    If there exist $G_i$ and $G_j$ such that $G_i \in \oL \cup \oP$ and $G_j \in \oR \cup \oP$, Left wins as the first player by moving to $G_i$ and Right wins as the first player by moving to $G_j$. Hence, the first player wins, and it follows that $G \in \oN$.    
    
    If there exists $G_i $ such that $G_i \in \oL$ and for any $G_j, G_j \in \oL \cup \oN$, then Left wins as the first player by moving to $G_i$. If Right moves to $G_j$, then since $G_j \in \oL \cup \oN$, Left wins as the first player at $G_j$. Thus, Left has a winning strategy for $G$, and it follows that $G \in \oL$.    
    
    Similarly, If there exists $G_i$ such that $G_i \in \oR$ and for any $G_j$, $G_j \in \oR \cup \oN,$ then $G \in \oR$.     
    
    Finally, if for any $G_i$, $G_i \in \oN,$ then the second player at $G$ has a winning strategy for every $G_i$ as the first player. Hence, $G \in \oP$.
    
    Since $\oL, \oR, \oN$ and $\oP$ are pairwise disjoint, the converse implication also holds.
    \qed
\end{proof}

\begin{proposition}
\label{prof:plussr}
For any position $G,$ the following hold:
    \begin{enumerate}
        \item $G \in \oL$ if and only if $G + \sR \in \oR$.
        \item $G \in \oR$ if and only if $G + \sR \in \oL$.
        \item $G \in \oP$ if and only if $G + \sR \in \oP$.
        \item $G \in \oN$ if and only if $G + \sR \in \oN$.
        
    \end{enumerate}
\end{proposition}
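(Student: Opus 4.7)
The plan is to prove all four equivalences simultaneously by induction on $b(G)$, invoking Theorem \ref{thm:outcome} to move between the outcomes of $G$ and those of its options. For the base case $b(G) = 0$, $G$ is either $\sL$ or $\sR$; Axiom \ref{axm:base} yields $\sL + \sR \iso \sR$ and $\sR + \sR \iso \sL$, so adding $\sR$ swaps $\oL$ and $\oR$, and since neither terminal lies in $\oP$ or $\oN$, parts (3) and (4) hold vacuously at this level.

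For the inductive step, write $G \iso \{G_1, \ldots, G_n\}$. Because $\sR$ has no options, the disjunctive sum unfolds as $G + \sR \iso \{G_1 + \sR, \ldots, G_n + \sR\}$. By the induction hypothesis, the assignment $G_i \mapsto G_i + \sR$ realizes the outcome permutation $\sigma$ with $\sigma(\oL) = \oR$, $\sigma(\oR) = \oL$, $\sigma(\oN) = \oN$, $\sigma(\oP) = \oP$. In particular, $\sigma$ sends $\oL \cup \oN$ and $\oL \cup \oP$ to $\oR \cup \oN$ and $\oR \cup \oP$ respectively, which are exactly the unions appearing in the dual clauses of Theorem \ref{thm:outcome}.

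Each of (1)--(4) then follows by translating one clause of Theorem \ref{thm:outcome} applied to $G$ through $\sigma$ into another clause applied to $G + \sR$. For instance, to establish (1): $G \in \oL$ iff some $G_i \in \oL$ and every $G_j \in \oL \cup \oN$; by the induction hypothesis this is iff some $G_i + \sR \in \oR$ and every $G_j + \sR \in \oR \cup \oN$; and by Theorem \ref{thm:outcome} the latter is iff $G + \sR \in \oR$. Items (2), (3), and (4) follow in the same way from the clauses characterizing $\oR$, $\oP$, and $\oN$. There is no substantive obstacle; the work reduces to the bookkeeping that $\sigma$ pairs the second and third clauses of Theorem \ref{thm:outcome} with each other and stabilizes the first and fourth clauses setwise.
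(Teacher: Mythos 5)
Your proposal is correct and follows essentially the same route as the paper: induction on the birthday of $G$, the base case via Axiom \ref{axm:base}, and the inductive step by unfolding $G + \sR \iso \{G_1 + \sR, \ldots, G_n + \sR\}$ and translating the clauses of Theorem \ref{thm:outcome} through the induction hypothesis. The only cosmetic difference is that you package the translation as an outcome permutation $\sigma$ and argue both directions as a chain of equivalences, whereas the paper proves the forward implications and deduces the converses from the pairwise disjointness of the outcome classes.
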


\begin{proof}
    We proceed by induction on $G$. If $G$ is a terminal position, then, by Axiom \ref{axm:base}, we have $*L + *R = *R \in \oR$, $*R + *R = *L \in \oL$.
    We assume that $G = \{G_1, \dots, G_n\}$ is not a terminal position and that every $G_i$ satisfies the statement.

    If $G \in \oL$, then there exists $G_i$ such that $G_i \in \oL$ and for any $G_j$, $G_j \in \oL \cup \oN$, by Theorem \ref{thm:outcome}.
    By the induction hypothesis and $G + *R = \{G_1 + *R, \dots, G_n + *R\}$, it follows that $G_i + *R \in \oR$ and for any $G_j$, $G_j + *R \in \oR \cup \oN$ holds. Thus, $G + *R \in \oR$.

    Similarly, if $G \in \oR$, then $G + *R \in \oL$.

    If $G \in \oN$, then there exist $G_i$ and $G_j$ such that $G_i \in \oL \cup \oP$ and $G_j \in \oR \cup \oP$, by Theorem \ref{thm:outcome}.
    By the induction hypothesis, we have $G_i + *R \in \oR \cup \oP$ and $G_j + *R \in \oL \cup \oP$. It follows that $G + *R \in \oN$.

    If $G \in \oP$, then for any $G_i, G_i \in \oN$ holds.
    By the induction hypothesis, we have $G_i + *R \in \oN$. It follows that $G + *R \in \oP$.

    Since $\oL, \oR, \oN, \oP$ are pairwise disjoint, the converse implication also holds.
    \qed
\end{proof}

\subsection{Equivalence relation}
 It is clear that isomorphism of positions is an equivalence relation. However, in combinatorial game theory, a weakened equivalence relation is more frequently used. That is, if  swapping $G$ and $H$ does not change the outcome class whatever position is added to both positions, then $G$ and $H$ can be considered equivalent. In this 
 paper, we also define equivalence in this manner.

\begin{definition}[Equivalent]
\label{def:equiv}
If positions $G$ and $H$ satisfy $o(G+X) = o(H+X)$ for any  position $X$, then we say $G$ and $H$ are {\em equivalent}, or $G \eq  H$.     
\end{definition}

\begin{remark}
    As we adopt Conway Style notation, the positions of $\LR$ Ending Partizan ruleset, defined in Definition \ref{def:positions} form a universal LR Ending Partizan ruleset, which satisfy Axiom 1, with no other terminal positions than $\sL$ and $\sR$.  In Definition \ref{def:equiv}, X runs over all positions in this universal ruleset.
\end{remark}

\begin{proposition}
    The relation $\eq$ is an equivalence relation.
\end{proposition}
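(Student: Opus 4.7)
The plan is to reduce the claim to the fact that ordinary equality of outcomes is itself an equivalence relation, since $\eq$ is defined as a universally quantified equality of outcomes. Concretely, $G \eq H$ means that the two maps $X \mapsto o(G+X)$ and $X \mapsto o(H+X)$, regarded as functions from positions to $\{\oL,\oR,\oN,\oP\}$, agree pointwise. So the entire proposition should follow from elementary properties of pointwise equality of functions.

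First I would verify reflexivity: for any position $G$ and any position $X$, the outcome $o(G+X)$ equals itself, so $G \eq G$. For symmetry, assume $G \eq H$; then for every $X$ we have $o(G+X) = o(H+X)$, and symmetrically $o(H+X) = o(G+X)$, giving $H \eq G$. For transitivity, suppose $G \eq H$ and $H \eq J$; then for every $X$ we have $o(G+X) = o(H+X)$ and $o(H+X) = o(J+X)$, and chaining these equalities yields $o(G+X) = o(J+X)$, so $G \eq J$.

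There is no genuine obstacle here: no induction on birthday is needed, no case analysis on outcome class is required, and Axiom \ref{axm:base} is not invoked. The proposition is essentially a formal consequence of the shape of Definition \ref{def:equiv}, so the proof will be only a few lines. The only thing to be careful about is to quantify $X$ correctly at each step (``for all $X$'') so that the hypotheses and conclusions line up before chaining equalities.
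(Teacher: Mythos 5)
Your proposal is correct and matches the paper's proof, which likewise observes that reflexivity, symmetry, and transitivity of $\eq$ follow immediately from Definition \ref{def:equiv} by the corresponding properties of equality of outcomes. Your version just spells out the pointwise-equality chaining explicitly, which the paper leaves as "easy to confirm."
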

\begin{proof}
It is easy to confirm that $G \eq G, G \eq H \Longrightarrow H \eq G,$ and $(G \eq H) \land  (H \eq J) \Longrightarrow G \eq J$ from Definition \ref{def:equiv}. 
\qed
\end{proof}

For this equivalence relation, in classical 
documents on combinatorial game theory, the symbol $=$ is used. However, recently, the symbol $\equiv$ has been used in some papers such as \cite{UN}. This is to avoid confusion, and we also follow this notation. 


\subsection{Conjugate}
In partisan games under normal play convention, a position in which the roles of Left and Right are swapped from the original position becomes the additive inverse of the original position. However, except for normal play convention, the existence of an additive inverse is not guaranteed in general. In such conventions, a position in which the roles of Left and Right are swapped is called a  conjugate of the original position. In $\LR$-ending partisan rulesets, as we will show in Theorem \ref{thm:noinverse}, there are some positions that do not have an additive inverse. Therefore, we will call swapped positions {\em conjugates}.

\begin{definition}[Conjugate] \mbox{}
When $G$ is a position, we define its conjugate, denoted as $\conj{G}$, inductively as follows:
\begin{itemize}
    \item $\conj{\sL} \iso \sR, \conj{\sR} \iso \sL$.
    \item For any position $G \iso \{G_1, \ldots, G_n\}, \conj{G} \iso \{\conj{G_1}, \ldots, \conj{G_n} \}. $
\end{itemize}
\end{definition}

\begin{proposition}
\label{prop:conj}
\mbox{}
\begin{enumerate}
    \item $\conj{\conj{G}} \iso G$.
    \item $G + G \in \oL \cup \oP$.
    \item $G + \conj{G} \in \oR \cup \oP$.
\end{enumerate}
\end{proposition}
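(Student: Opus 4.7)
The plan is to prove (1) by a straightforward structural induction on the birthday of $G$, and to prove (2) and (3) by induction on $b(G)$ using a copycat (symmetry) strategy, which is legitimate here because in an ending partisan ruleset both players share the same set of options at every position. In (2) and (3) I will translate the strategic claim into outcome-class membership through Theorem \ref{thm:outcome}, while the base cases are handled entirely by Axiom \ref{axm:base}.

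For (1), the bases $\conj{\conj{\sL}} \iso \conj{\sR} \iso \sL$ and $\conj{\conj{\sR}} \iso \sR$ are immediate. In the recursive step with $G \iso \{G_1,\ldots,G_n\}$, unfolding the definition of conjugate twice and applying the inductive hypothesis option by option gives $\conj{\conj{G}} \iso \{\conj{\conj{G_1}},\ldots,\conj{\conj{G_n}}\} \iso \{G_1,\ldots,G_n\} \iso G$.

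For (2), I will show that Left wins whenever Right is the first player in $G+G$; by Theorem \ref{thm:outcome} this rules out both $\oN$ and $\oR$, placing $G+G$ in $\oL \cup \oP$. The terminal case is immediate from $\sL + \sL \iso \sR + \sR \iso \sL$. Otherwise, any first move by Right turns one copy of $G$ into some option $G_i$, and Left replies with the identical move in the other copy, reaching $G_i + G_i$. The induction hypothesis gives $G_i + G_i \in \oL \cup \oP$, and since Right is on move there with Left as the second player, Left wins regardless of which of $\oL$ or $\oP$ applies.

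For (3), the strategy is the mirror image: show that Right wins whenever Left moves first in $G + \conj{G}$, which analogously rules out $\oN$ and $\oL$. The terminal case uses $\sL + \sR \iso \sR + \sL \iso \sR$. Otherwise, for $G \iso \{G_1,\ldots,G_n\}$, any first Left move takes $G$ to some $G_i$ or takes $\conj{G}$ to $\conj{G_i}$; Right's matching reply in the opposite summand reaches $G_i + \conj{G_i}$, which by induction lies in $\oR \cup \oP$, and Right wins as the second player. The only step requiring a moment of care is that $\conj{G_i}$ is by definition an option of $\conj{G}$, so the mirroring reply is always legal; beyond that the argument is bookkeeping about who is on move after the copycat response.
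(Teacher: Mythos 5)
Your proposal is correct and follows essentially the same route as the paper: part (1) by structural induction on the definition of conjugate, and parts (2) and (3) by the copycat argument showing that the appropriate player wins moving second, with the base cases supplied by Axiom \ref{axm:base}. The paper's proof is just a terser version of the same induction.
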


\begin{proof}
(1) This is from the definition of conjugate and induction on $G$.

(2) It is enough to show that Left can win from $G + G$ as the second player. We prove this by induction on 
$G$. If $G$ is a terminal position, then $G + G \in \oL$ since $\sL + \sL \iso \sR + \sR \iso \sL \in \oL$. If $G$ is not a terminal position, then for any option $G_i + G$, there is an option $G_i + G_i \in \oL\cup \oP$ from the induction hypothesis. Therefore, $ G + G \in \oL \cup \oP$.

(3) The proof is similar to (2).
\qed
\end{proof}

\begin{proposition}
    If $G \iso \conj{G},$ then $G + G \in \oP$.
\end{proposition}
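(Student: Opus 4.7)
The plan is to combine parts (2) and (3) of the preceding Proposition \ref{prop:conj} and exploit that isomorphism is preserved under disjunctive sum. First I would observe that if $G \iso \conj{G}$, then the replacement of one summand by an isomorphic position inside $G + \conj{G}$ yields $G + G \iso G + \conj{G}$; this follows from a straightforward induction on birthday using the recursive definition of the disjunctive sum. Consequently the outcomes coincide, i.e., $o(G+G) = o(G+\conj{G})$.

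Next I would simply intersect the two constraints already available. By Proposition \ref{prop:conj}(2) we have $G + G \in \oL \cup \oP$, and by Proposition \ref{prop:conj}(3) together with the isomorphism above we have $G + G \in \oR \cup \oP$. Since $\oL$, $\oR$, $\oN$, $\oP$ are pairwise disjoint, the intersection of $\oL \cup \oP$ and $\oR \cup \oP$ is exactly $\oP$, so $G + G \in \oP$.

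There is essentially no serious obstacle; the only ingredient that must be verified (and that I would either state explicitly or leave as an immediate remark) is that isomorphic positions have the same outcome class and that $G \iso H$ implies $G + K \iso H + K$ for every position $K$. Both are immediate inductions on birthday using Definition \ref{def:isomorphism} and the recursive formula for the disjunctive sum, and both are already implicitly used throughout the section, so the proof reduces to one or two lines of text.
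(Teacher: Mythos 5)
Your proposal is correct and follows essentially the same route as the paper: intersecting the conclusions of Proposition \ref{prop:conj}(2) and (3), using $G \iso \conj{G}$ to transfer (3) to $G+G$. The extra care you take in justifying that isomorphic positions may be substituted inside a sum is a harmless elaboration of what the paper leaves implicit.
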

\begin{proof}
    From Proposition \ref{prop:conj} (2) and (3), we have $G + G \in \oL \cup \oP$ and $G + G \in \oR \cup \oP$. Therefore, $G + G \in \oP$.
    \qed
\end{proof}

\begin{proposition}
\label{prop:rconj}
    $G + \sL \iso G$ and $G + \sR \iso \conj{G}$ hold.
\end{proposition}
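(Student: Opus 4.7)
The plan is to prove both statements simultaneously by induction on the birthday $b(G)$, using Definition \ref{def:isomorphism} for $\iso$ together with Axiom \ref{axm:base} for the base case and the recursive definition of $+$ for the inductive step.

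For the base case $b(G) = 0$, the position $G$ is either $\sL$ or $\sR$. Axiom \ref{axm:base} directly gives $\sL + \sL \iso \sL$ and $\sR + \sL \iso \sL + \sR \iso \sR$ (using the commutativity already established), which establishes $G + \sL \iso G$. For the second statement, Axiom \ref{axm:base} gives $\sL + \sR \iso \sR \iso \conj{\sL}$ and $\sR + \sR \iso \sL \iso \conj{\sR}$, matching the definition of $\conj{\cdot}$ on terminal positions.

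For the inductive step, suppose $G \iso \{G_1, \ldots, G_n\}$ with $b(G) > 0$ and that both claims hold for each $G_i$. Since $\sL$ is terminal, the recursive definition of disjunctive sum (with the convention that a terminal summand contributes no options) gives $G + \sL \iso \{G_1 + \sL, \ldots, G_n + \sL\}$. By the induction hypothesis, $G_i + \sL \iso G_i$ for every $i$, so the identity map on indices yields a bijection between the option sets of $G + \sL$ and $G$ witnessing $G + \sL \iso G$ by Definition \ref{def:isomorphism}. The argument for $G + \sR$ is parallel: we have $G + \sR \iso \{G_1 + \sR, \ldots, G_n + \sR\}$, the induction hypothesis gives $G_i + \sR \iso \conj{G_i}$, and the definition of $\conj{\cdot}$ gives $\conj{G} \iso \{\conj{G_1}, \ldots, \conj{G_n}\}$, so again the identity bijection of index sets provides the required matching.

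There is no real obstacle here, since everything follows mechanically once the recursive definitions are unwound. The only point worth highlighting is that the claim is about structural isomorphism $\iso$ rather than the coarser equivalence $\eq$, so the argument must produce an explicit bijection of option sets at each stage rather than appealing to outcome-preservation; invoking Definition \ref{def:isomorphism} directly at the inductive step handles this cleanly.
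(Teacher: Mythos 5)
Your proof is correct and follows essentially the same route as the paper's: induction on the birthday of $G$, with Axiom \ref{axm:base} handling the terminal cases and the recursive definition of the disjunctive sum (a terminal summand contributing no options) together with the induction hypothesis and the definition of conjugate handling the inductive step. The only cosmetic difference is that you spell out the bijection required by Definition \ref{def:isomorphism} explicitly, and you invoke commutativity for $\sR + \sL$ where Axiom \ref{axm:base} already covers both orders; neither affects correctness.
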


\begin{proof}
    We prove this by induction on 
    $G$. If $G \iso \sL, $ then we have $ \sL + \sL \iso \sL $ and $\sL + \sR \iso \sR$ from Axiom \ref{axm:base}. Also, if $G \iso \sR, $ then we have $\sR + \sL \iso \sR$ and $\sR + \sR \iso \sL$. 

    Assume that $G \iso \{G_1, \ldots, G_n\}$ is not a terminal position. Then, from the induction hypothesis, we have
    $$G + \sL \iso \{G_1 + \sL, \ldots, G_n + \sL\} \iso \{G_1, \ldots, G_n\} \iso G,$$
    and 
    $$G + \sR \iso \{G_1 + \sR, \ldots, G_n + \sR\} \iso \{\conj{G_1}, \ldots, \conj{G_n}\} \iso \conj{G}.$$

    \qed
\end{proof}

\begin{theorem}
\label{thm:noinverse}
    If $G = \conj{G}$ e.g., $G = \{\sL, \sR\}$, then for any $G', G + G' \not \equiv \sL$. Hence, such $G$ does not have an inverse in the game value monoid, and therefore the monoid is not a group.
\end{theorem}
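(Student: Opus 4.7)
The plan is, for the example $G = \{\sL, \sR\}$ highlighted in the statement (which satisfies $G = \conj{G}$ since $\conj{G} \iso \{\conj{\sL}, \conj{\sR}\} \iso \{\sR, \sL\} \iso G$), to prove the stronger claim that $G + G' \not\in \oL$ for every position $G'$. This suffices: since $\sL \in \oL$ and equivalence forces equality of outcomes (apply Definition \ref{def:equiv} with $X \iso \sL$, using Proposition \ref{prop:rconj} to recognise $\sL$ as the additive identity), any $G'$ satisfying $G + G' \equiv \sL$ would have $o(G + G') = \oL$.

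I would prove $G + G' \not\in \oL$ by induction on $b(G')$. For the base cases $G' \iso \sL$ and $G' \iso \sR$, Proposition \ref{prop:rconj} gives $G + \sL \iso G$ and $G + \sR \iso \conj{G} \iso G$; since $G \iso \{\sL, \sR\}$ has the option $\sR \in \oR$, the characterisation in Theorem \ref{thm:outcome} rules out $G \in \oL$. For the inductive step, write $G' \iso \{G'_1, \ldots, G'_m\}$ and assume for contradiction that $G + G' \in \oL$. The options of $G + G'$ are exactly $\sL + G' \iso G'$, $\sR + G' \iso \conj{G'}$, and the $m$ positions $G + G'_j$. By Theorem \ref{thm:outcome}, every option must lie in $\oL \cup \oN$ and at least one must lie in $\oL$. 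The inductive hypothesis disqualifies every $G + G'_j$ as an $\oL$-witness, so either $G' \in \oL$ or $\conj{G'} \in \oL$. If $G' \in \oL$, then Propositions \ref{prof:plussr} and \ref{prop:rconj} together give $\conj{G'} \iso G' + \sR \in \oR$, contradicting the requirement that this option of $G + G'$ lie in $\oL \cup \oN$; the case $\conj{G'} \in \oL$ is entirely symmetric.

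The final clause of the theorem is then immediate: by Proposition \ref{prop:rconj} the terminal position $\sL$ is the identity of the game value monoid, so the non-existence of a $G'$ with $G + G' \equiv \sL$ is precisely the assertion that $G$ possesses no additive inverse, and hence the monoid cannot be a group. The main obstacle I anticipate is the careful accounting in the inductive step of which options of $G + G'$ can serve as the needed $\oL$-witness: once this is narrowed to the two options $G'$ and $\conj{G'}$ coming from $G$ itself, the self-conjugacy $G = \conj{G}$ combined with the outcome-swap content of Propositions \ref{prof:plussr} and \ref{prop:rconj} closes the argument.
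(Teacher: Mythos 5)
Your induction for the particular position $G \iso \{\sL, \sR\}$ is correct as far as it goes: the base cases, the identification of the options of $G + G'$ as $G'$, $\conj{G'}$ and the $G + G'_j$, and the use of Propositions \ref{prof:plussr} and \ref{prop:rconj} to rule out $G'$ and $\conj{G'}$ as the required $\oL$-witness are all sound. The gap is one of generality: the theorem is quantified over \emph{every} $G$ with $G \iso \conj{G}$, and your argument hinges on the moves inside $G$ leading exactly to $\sL$ and $\sR$, so that the options of $G + G'$ contributed by $G$ are precisely $G'$ and $\conj{G'}$. For a general self-conjugate $G$ (say $G \iso \{\{\sL,\sR\}\}$) those options are arbitrary sums $G_i + G'$ and your accounting of which option can serve as the $\oL$-witness no longer closes. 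Since a single example suffices for the final clause (the monoid is not a group), the damage is limited, but the universally quantified first sentence of the theorem is not established.

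The paper's proof is both fully general and much shorter, dispensing with the induction on $G'$ entirely: by Proposition \ref{prop:rconj} and commutativity/associativity, $\conj{G+G'} \iso G + G' + \sR \iso (G + \sR) + G' \iso \conj{G} + G' \iso G + G'$, so the sum is itself self-conjugate; and a self-conjugate position can lie in neither $\oL$ nor $\oR$, because Propositions \ref{prof:plussr} and \ref{prop:rconj} show that adding $\sR$ (equivalently, conjugating) interchanges those two outcomes while the outcome classes are pairwise disjoint. Hence $G + G' \in \oN \cup \oP$ for every $G'$, and since $\sL \in \oL$, taking $X \iso \sL$ in Definition \ref{def:equiv} gives $G + G' \not\eq \sL$. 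You already use exactly the outcome-swapping content of those propositions inside your inductive step; lifting it out of the induction and applying it directly to the sum $G + G'$ yields the statement for all self-conjugate $G$ at once.
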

\begin{proof}
    By Proposition \ref{prop:rconj}, $\conj{G} = G + \sR = G$ holds. For any position $G'$, we have $\conj{G + G'} = G + G' + \sR = G + G'$. Thus $G + G' \in \oN \cup \oP$. Since $\sL \in \oL$, it follows that $G + G' \not \equiv \sL$.
     \qed
\end{proof}

\subsection{Partial order}
In partisan rulesets under normal play convention, 
 we can introduce a partial order. That is, if a position $G$ is better than $H$ for Left whatever position is added to  both positions, then we say that $G$ is greater than $H$, and if $G$ is better than $H$ for Right, then we say that $G$ is smaller than $H$.

It is natural to have a question such that whether a similar partial order can be defined in $\LR$-ending partisan rulesets. However, as shown in Theorem \ref{thm:order}, there are no pairs of positions such that one is greater than the other.

\begin{definition}[Partial order]
\label{def:po}
If positions $G$ and $H$ satisfy $o(G+X) \ge o(H+X)$ for any  position $X$, then $G \ge  H$. Similarly, if positions $G$ and $H$ satisfy $o(G+X) \le o(H+X)$ for any  position $X$, then $G \le  H$. 

We say $G > H$ (resp. $G < H, G \fzz H$) if and only if $G \ge H$ and $G \not \le H$ (resp. $G \le H$ and $G \not \ge H, $ $G \not \ge H$ and $G \not \le H$) hold.  
\end{definition}

Note that, from Definitions \ref{def:equiv} and \ref{def:po}, $G \eq H$ if and only if both $G \ge H$ and $G \le H$ hold.

\begin{theorem}
\label{thm:order}
    For any position $G$, there is no position $H$ such that $G > H$.
\end{theorem}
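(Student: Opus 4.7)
The plan is to show that $G \ge H$ already forces $G \le H$, from which $G \eq H$ follows, so the strict relation $G > H$ is impossible. Suppose $G \ge H$, i.e., $o(G+X) \ge o(H+X)$ for every position $X$. I will derive $o(G+Y) \le o(H+Y)$ for every position $Y$.

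The key ingredient is that the translation $A \mapsto A + \sR$ reverses the outcome partial order. By Proposition \ref{prof:plussr}, this map induces on the four outcome classes the transposition $\oL \leftrightarrow \oR$ with $\oP$ and $\oN$ fixed. A finite case check against the order $\oL > \oP > \oR$, $\oL > \oN > \oR$, $\oP \fzz \oN$ shows that this transposition reverses every comparability and preserves the lone incomparability. Hence for any two positions $A$ and $B$,
$$o(A) \ge o(B) \iff o(A + \sR) \le o(B + \sR).$$

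To conclude, given an arbitrary $Y$, apply the hypothesis $G \ge H$ to the test position $X := Y + \sR$; this gives $o(G + Y + \sR) \ge o(H + Y + \sR)$, and the order-reversing property above then yields $o(G + Y) \le o(H + Y)$. Since $Y$ was arbitrary, $G \le H$, hence $G \eq H$ by Definitions \ref{def:equiv} and \ref{def:po}, contradicting $G > H$.

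The only step that requires real verification is the order-reversal property of $A \mapsto A + \sR$, and this is an immediate finite case analysis from Proposition \ref{prof:plussr}; I do not foresee any genuine obstacle. Conceptually, the theorem says that the ability to append $\sR$ (equivalently, to pass to a conjugate via Proposition \ref{prop:rconj}) always lets one witness the reverse comparison, so the partial order on positions collapses to the equivalence relation $\eq$ itself.
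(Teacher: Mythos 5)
Your proof is correct and rests on the same key fact as the paper's: by Proposition \ref{prof:plussr}, adding $\sR$ transposes $\oL \leftrightarrow \oR$ while fixing $\oP$ and $\oN$, which reverses the outcome order. The paper phrases this as a three-case check that any strict inequality $o(G+X) > o(H+X)$ flips at $X + \sR$, whereas you package it as a global order-reversal showing $G \ge H$ implies $G \le H$; this is a cosmetic difference, not a different route.
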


\begin{proof}
    Let $G, H,$ and $X$ be positions.
    Assume that $o(G + X) > o(H + X)$. We consider three cases:
    \begin{itemize}
        \item We assume $o(G + X) = \oL$ and $o(H + X) \neq \oL$. Let $X' \iso X + \sR$. Then, from Proposition \ref{prof:plussr}, $o(G + X') = \oR$ and $o(H+ X') \neq \oR$. Therefore, $o(G + X') < o(H+X').$
        \item We assume that $o(G + X) = \oN$  and $o(H + X) = \oR$. Let $X' \iso X + \sR$. Then, from Proposition \ref{prof:plussr}, $o(G + X') = \oN$ and $o(H + X') = \oL$. Therefore, $o(G+X') < o(H+X')$.
        \item We assume that  $o(G + X) = \oP$ and $o(H + X) = \oR$. Let $X' \iso X + \sR$. Then, from Proposition \ref{prof:plussr}, $o(G + X') = \oP$ and $o(H + X') = \oL$. Therefore, $o(G+X') < o(H+X')$.
    \end{itemize}
    Therefore, for any position $G$, there is no position $H$ such that $G > H$.
    \qed
\end{proof}

\begin{corollary}
    For any positions $G$ and $H$, the relationship between them can be only $G \eq H$ or $G \fzz H$.
\end{corollary}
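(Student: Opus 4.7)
The plan is to derive the corollary directly from Theorem \ref{thm:order} together with the trichotomy built into Definition \ref{def:po}. By that definition, for any two positions $G$ and $H$, exactly one of the four relations $G \eq H$, $G > H$, $G < H$, or $G \fzz H$ must hold, since these correspond to the four combinations of the truth values of $G \ge H$ and $G \le H$ (recalling, from the note after Definition \ref{def:po}, that $G \eq H$ coincides with having both $G \ge H$ and $G \le H$).

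First I would invoke Theorem \ref{thm:order} to eliminate the possibility $G > H$: the theorem asserts that no such pair exists. Next I would eliminate $G < H$ by symmetry, applying Theorem \ref{thm:order} with the roles of $G$ and $H$ interchanged, since $G < H$ is by definition the same as $H > G$. Once both strict relations are ruled out, the only remaining options in the four-way trichotomy are $G \eq H$ and $G \fzz H$, which is precisely the statement of the corollary.

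There is no real obstacle here; the corollary is essentially a restatement of Theorem \ref{thm:order} together with its symmetric counterpart. The only point worth being careful about is confirming that Definition \ref{def:po} truly covers all cases — that is, that the four relations exhaust the possibilities — but this is immediate from the two-by-two case analysis on whether $G \ge H$ and $G \le H$ hold.
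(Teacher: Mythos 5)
Your proof is correct and is exactly the argument the paper intends (the corollary is stated without proof as an immediate consequence of Theorem \ref{thm:order}): the four-way case split on whether $G \ge H$ and $G \le H$ hold, followed by eliminating $G > H$ via the theorem and $G < H$ via the theorem with $G$ and $H$ swapped, is the right and complete reasoning.
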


\section{Values and their constructions}
\label{sec:values}

\subsection{Notable values}
Some positions are regarded as equivalent through the equivalence relation. Thus, we can consider equivalence classes that are called values, and by examining their algebraic properties, combinatorial game theory efficiently investigates the outcome classes of sums of various positions. 

A famous example of equivalence classes in games is Sprague-Grundy values, investigated by Sprague in \cite{Spr} and Grundy in \cite{Gru}, for impartial games under the normal play convention.
In impartial games under the normal play convention, there is only one terminal position $0$ and other positions are defined recursively like Definition \ref{def:positions}.

Then, for any position $G \iso \{G_1, \ldots, G_n\},$ its Sprague-Grundy value $\mathcal{G}(G)$ is recursively defined as

$$
\mathcal{G}(G) = {\rm mex}(\{\mathcal{G}(G_1), \ldots, \mathcal{G}(G_n)\}),
$$

where for $S \subseteq \mathbb{Z}_{\ge 0}$, ${\rm mex}(S) = \min(\mathbb{Z}_{\ge 0} \setminus S)$ and $\mathbb{Z}_{\ge 0}$ is the set of all nonnegative integers.

Then, a position $G$ is a $\oP$-position if and only if $\mathcal{G}(G) = 0$ and $G$ is an $\oN$-position if and only if $\mathcal{G}(G) \ne 0$.
Moreover, $\mathcal{G}(G+H) = \mathcal{G}(G) \oplus \mathcal{G}(H),$ where $\oplus$ is the XOR operator for binary notation, which is also called {\em nim-sum} in the context of combinatorial game theory.
This means that two positions whose Sprague-Grundy values are the same can be regarded as equivalent in impartial games under the normal play convention.

Regarding the 
nim-sum and ${\rm mex}$, we will use the following well-known result.
\begin{lemma}
\label{lem:mex}
    Let $a_1, \ldots, a_n$ be nonnegative integers. Then, $a_1 \oplus \cdots \oplus a_n = {\rm mex}(\bigcup_{1 \le i \le n}\{a_1 \oplus \cdots \oplus a_{i-1} \oplus a' \oplus a_{i+1} \oplus \cdots \oplus a_n \mid 0 \le a' < a_i\}).$
    
\end{lemma}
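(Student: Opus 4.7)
The plan is to show the two halves of a mex equality separately: that the right-hand side set $S := \bigcup_{i}\{a_1\oplus\cdots\oplus a_{i-1}\oplus a'\oplus a_{i+1}\oplus\cdots\oplus a_n \mid 0\le a'<a_i\}$ omits $s := a_1\oplus\cdots\oplus a_n$, and that $S$ contains every nonnegative integer $t<s$. Set $s$ is then the least nonnegative integer not in $S$, which is precisely $\mathrm{mex}(S)$.

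For the first half, suppose for contradiction that $s$ lies in $S$, coming from some index $i$ and some $a'<a_i$. Taking the XOR of this equation with $a_1\oplus\cdots\oplus a_{i-1}\oplus a_{i+1}\oplus\cdots\oplus a_n$ and using that $x\oplus x=0$, the identity collapses to $a'=a_i$, contradicting $a'<a_i$. This gives $s\notin S$ immediately, with no case analysis needed.

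For the second half, fix $t<s$, so $t\oplus s\ne 0$. Let $k$ be the position of the most significant bit of $t\oplus s$. Because $t<s$ and $k$ is the top bit where $t$ and $s$ differ, bit $k$ of $s$ must be $1$ and bit $k$ of $t$ must be $0$. Since bit $k$ of $s=a_1\oplus\cdots\oplus a_n$ is $1$, an odd number of the $a_j$ have bit $k$ equal to $1$; pick any such index $i$ and define $a':=a_i\oplus t\oplus s$. Above bit $k$ the bits of $t\oplus s$ vanish, so $a'$ agrees with $a_i$ there, while at bit $k$ a $1$-bit of $a_i$ is flipped to $0$; thus $a'<a_i$. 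Finally, replacing $a_i$ by $a'$ changes the XOR $s$ by $a_i\oplus a'=t\oplus s$, yielding the value $t$, so $t\in S$.

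The only mildly delicate step is the verification that the candidate $a':=a_i\oplus t\oplus s$ really satisfies $a'<a_i$, which needs the argument about the leading bit of $t\oplus s$; once that is in place the rest is symbol pushing with $\oplus$. No induction on $n$ is required, although one could alternatively reduce to the $n=2$ case by regrouping $a_2\oplus\cdots\oplus a_n$ and recursing, at the cost of a less transparent proof.
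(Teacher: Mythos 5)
Your proof is correct and complete. Note that the paper itself gives no proof of this lemma: it is introduced with the phrase ``we will use the following well-known result'' and stated without a proof environment, so there is no argument of the authors' to compare against. What you supply is the standard argument (essentially Bouton's analysis of {\sc nim}): the cancellation $x\oplus x=0$ shows that the total nim-sum $s$ cannot be realized by replacing a single $a_i$ with a strictly smaller $a'$, and for each $t<s$ the leading bit $k$ of $t\oplus s$ must be a $1$-bit of $s$, hence of some $a_i$, so that $a'=a_i\oplus t\oplus s$ satisfies $a'<a_i$ and realizes $t$. Both halves are verified correctly, the degenerate case $s=0$ is handled automatically since only the first half is then needed, and the conclusion $\mathrm{mex}(S)=s$ follows. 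Your proof would serve as a self-contained justification of the lemma if the authors chose to include one.
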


Note that if $a_1 \oplus \cdots \oplus a_n = a > 0$, then for any $0 \leq b < a$, there exists some $a_i$ such that $$b = a_1 \oplus \cdots \oplus a_{i-1} \oplus a' \oplus a_{i+1} \oplus \cdots \oplus a_n$$ for some $a' < a_i$.

As well as the Sprague-Grundy values, in this paper, we assign names to several characteristic values in $\LR$-ending partisan rulesets and show that their behavior is similar to Sprague-Grundy values.

\begin{definition}
    Let $\sL_0 \iso \sL$ and $\sR_0 \iso \sR$.
    For any positive integer $n$, we also let $\sL_n \iso \{\sL_{n-1}, \ldots, \sL_0\}$ and $\sR_n \iso \{\sR_{n-1}, \ldots, \sR_0\}$.
\end{definition}

Note that for any $n,$ $o(\sL_n) = \oL$ and $o(\sR_n) = \oR$ hold.

\begin{theorem}\label{mex}
    Assume that $G \iso \{\sL_{a_1}, \ldots, \sL_{a_\ell}\}$ and $a = {\rm mex}(\{a_1, \ldots, a_\ell\})$. Then, $G \eq \sL_{a}$.
\end{theorem}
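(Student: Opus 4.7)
The plan is to prove $o(G+X) = o(\sL_a + X)$ for every position $X$ by strong induction on $b(X)$, which yields $G \eq \sL_a$ directly from Definition \ref{def:equiv}. For the base case $b(X)=0$, Proposition \ref{prop:rconj} gives $G + \sL \iso G$ and $\sL_a + \sL \iso \sL_a$; both positions have every option in $\oL$, so both lie in $\oL$ by Theorem \ref{thm:outcome}, and the case $X \iso \sR$ is symmetric via conjugation.

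For the inductive step, write $X \iso \{X_1, \ldots, X_k\}$. The induction hypothesis gives $o(G + X_j) = o(\sL_a + X_j)$ for each $j$, so the moves into the $X$-summand contribute identical outcome sets on both sides. Since $a = {\rm mex}\{a_1,\ldots,a_\ell\}$, every index $m \in \{0,1,\ldots,a-1\}$ occurs among the $a_i$, so every option $\sL_m + X$ of $\sL_a + X$ is also an option of $G+X$. The only genuine discrepancy is the collection of extra options $\sL_b + X$ of $G+X$ with $b>a$. I would then case-split on $o(G+X) \in \{\oL, \oR, \oN, \oP\}$ and check in each case that $\sL_a + X$ lies in the same class, translating witnesses of winning moves between the two games and using the induction hypothesis to bridge the $X_j$-components.

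The main obstacle is controlling these extras, since in principle an extra option could introduce a new outcome into the option set and flip the verdict. The crucial observation is that $\sL_a$ is itself an option of $\sL_b$ whenever $b>a$, so $\sL_a + X$ always appears among the options of $\sL_b + X$. Via the outcome rules of Theorem \ref{thm:outcome}, this constrains $o(\sL_b + X)$ to be compatible with $o(\sL_a + X)$: if $\sL_a + X \in \oL$ then $\sL_b + X$ is automatically Left-happy and hence $\sL_b + X \in \oL \cup \oN$, while if $\sL_a + X \in \oP$ then $\sL_b + X$ is both Left- and Right-happy and hence $\sL_b + X \in \oN$. Combined with the induction hypothesis, this shows the extras never change the outcome classification. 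The most delicate subcase is the $\oL$-branch: after verifying that every option of $\sL_a + X$ lies in $\oL \cup \oN$ (so $\sL_a + X \in \oL$ or $\oP$), I still need to rule out $\sL_a + X \in \oP$, which I would do by tracing the $\oL$-option witnessing $o(G+X) = \oL$ through its possible forms (an $\sL_m + X$ with $m < a$, an extra $\sL_b + X$, or a $G + X_j$) and deriving a contradiction in each.
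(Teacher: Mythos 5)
Your proposal is correct and takes essentially the same route as the paper: the same induction on $X$, the same use of the mex condition to guarantee that every option $\sL_m + X$ with $m < a$ is shared, the induction hypothesis for moves inside $X$, and the same key observation that each extra option $\sL_b + X$ with $b > a$ is harmless because $\sL_a + X$ occurs among its options. The only difference is presentational: the paper transfers explicit winning strategies from $\sL_a + X$ to $G + X$ (from an extra option the responder simply moves back to $\sL_a + X$), whereas you run the case analysis on outcome classes in the reverse direction, which, as you note, needs slightly fiddlier bookkeeping in the $\oL$ and $\oN$ branches but does go through.
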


\begin{proof}
    We show that for any position $X$, $o(G + X) = o(\sL_{a} + X).$ We prove this by induction on $X$. When $X$ is a terminal position, 
        $o(G + \sL) = o(\sL_{a}) = \oL$ and $o(G + \sR) = o(\sL_{a} + \sR) = \oR$.

    Assume that for any position $X$, if $b(X) < n$ then $o(G+X) = o(\sL_{a} + X).$ Consider the case $b(X)= n$.
    
    If Left has a winning strategy from $\sL_{a} + X$, then it is a move to $\sL_{a_i} + X \in \oL \cup \oP$, where $a_i < a$ or $\sL_{a} + X' \in \oL \cup \oP$ , where $X'$ is an option of $X$. 
    If $\sL_{a_i} + X \in \oL \cup \oP$, then $G + X$ also has an option to $\sL_{a_i} + X$. If $\sL_{a} + X' \in \oL \cup \oP$, from the induction hypothesis, $G + X' \in \oL \cup \oP$. Thus, Left can win from $G + X$ by moving to $\sL_{a_i} + X$ or $G + X'$.

    If Left can win from $\sL_{a} + X$ by playing second, for any $a_i < a, $ $\sL_{a_i} + X \in \oL \cup \oN$ and for any option $X'$ of $X$, $\sL_{a} + X' \in \oL \cup \oN$. From the induction hypothesis, $G + X' \in \oL \cup \oN$.

    The remaining case is Right moves to $\sL_{a_i} + X$, where $a_i > a$. In this case, from this position, Left can move to $\sL_{a} + X \in \oL \cup \oP$. Therefore, from every option of $G + X,$ Left can win by moving second, which means $G + X \in \oL \cup \oP$.

    Similarly, if Right can win from $\sL_a + X$ by moving first, Right can win from $G + X$ by moving first and if Right can win from $\sL_a + X$ by moving second, Right can win from $G+X$ by moving second.
    Therefore, $o(\sL_a+X) = o(G+X).$
    \qed
\end{proof}

\begin{corollary}\label{lrn}
    For any nonnegative integers $a_1$ and $a_2$, the following holds:
    \begin{itemize}
        \item $\sL_{a_1} + \sL_{a_2} \eq \sL_{a_1 \oplus a_2}.$
        \item $\sL_{a_1} + \sR_{a_2} \eq \sR_{a_1 \oplus a_2}.$
        \item $\sR_{a_1} + \sR_{a_2} \eq \sL_{a_1 \oplus a_2}.$
    \end{itemize}
\end{corollary}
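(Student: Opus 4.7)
The plan is to establish the first identity $\sL_{a_1}+\sL_{a_2}\eq\sL_{a_1\oplus a_2}$ by induction on $a_1+a_2$, and then reduce the remaining two identities to it using Proposition \ref{prop:rconj} and Axiom \ref{axm:base}. A preliminary observation that will be used throughout is $\conj{\sL_a}\iso\sR_a$, which follows by a one-line induction from the definitions of $\sR_a$ and the conjugate; combined with $G+\sR\iso\conj{G}$ from Proposition \ref{prop:rconj}, this yields $\sL_a+\sR\iso\sR_a$ and, using $\sR+\sR\iso\sL$ together with $G+\sL\iso G$, also $\sR_a+\sR\iso\sL_a$.

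For the first identity, the base case $a_1=0$ (symmetrically $a_2=0$) is immediate from $\sL_0\iso\sL$ and $G+\sL\iso G$. In the inductive step with $a_1,a_2\ge 1$, the definition of disjunctive sum gives that the options of $\sL_{a_1}+\sL_{a_2}$ are exactly the positions $\sL_{a_1'}+\sL_{a_2}$ for $0\le a_1'<a_1$ and $\sL_{a_1}+\sL_{a_2'}$ for $0\le a_2'<a_2$. By the induction hypothesis, these are equivalent to $\sL_{a_1'\oplus a_2}$ and $\sL_{a_1\oplus a_2'}$, respectively. Lemma \ref{lem:mex} applied to the two-variable nim-sum tells us that
$$\mathrm{mex}\bigl(\{a_1'\oplus a_2:0\le a_1'<a_1\}\cup\{a_1\oplus a_2':0\le a_2'<a_2\}\bigr)=a_1\oplus a_2,$$
so Theorem \ref{mex} would conclude the proof if the options were \emph{literally} of the form $\sL_b$. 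The main obstacle is this mismatch: our options are only equivalent to, not isomorphic to, the required $\sL_b$'s. We bridge the gap by a routine substitution remark, namely that if $G_i\eq H_i$ for each $i$, then $\{G_1,\ldots,G_n\}\eq\{H_1,\ldots,H_n\}$. This is verified by induction on $X$: the outcome of $\{G_i\}+X$ depends only on the outcomes of its options $G_i+X$ and $\{G_i\}+X'$, which coincide with the outcomes of $H_i+X$ (by definition of $\eq$) and of $\{H_i\}+X'$ (by the inductive hypothesis on $X$).

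Finally, the second and third identities reduce to the first. Using $\sR_{a_2}\iso\sL_{a_2}+\sR$,
$$\sL_{a_1}+\sR_{a_2}\iso\sL_{a_1}+\sL_{a_2}+\sR\eq\sL_{a_1\oplus a_2}+\sR\iso\sR_{a_1\oplus a_2},$$
where the middle step combines the first identity with the obvious congruence $G\eq H\Rightarrow G+\sR\eq H+\sR$ that is immediate from Definition \ref{def:equiv}. Similarly,
$$\sR_{a_1}+\sR_{a_2}\iso\sL_{a_1}+\sL_{a_2}+\sR+\sR\iso\sL_{a_1}+\sL_{a_2}+\sL\iso\sL_{a_1}+\sL_{a_2}\eq\sL_{a_1\oplus a_2},$$
using $\sR+\sR\iso\sL$ from Axiom \ref{axm:base} and $G+\sL\iso G$ from Proposition \ref{prop:rconj}, followed by the first identity. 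No additional induction is required for these two cases.
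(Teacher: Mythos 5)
Your proof is correct, and for the first identity it is essentially the paper's argument: induction on $a_1+a_2$, expansion of the options of $\sL_{a_1}+\sL_{a_2}$, the induction hypothesis, and then Lemma \ref{lem:mex} together with Theorem \ref{mex}. You depart from the paper in two ways, both improvements in rigor or economy. First, the paper's displayed chain silently replaces the options of a position by equivalent (not isomorphic) positions and continues to reason as if Theorem \ref{mex} applied; you correctly flag this as the main gap to bridge and supply the needed substitution lemma ($G_i\eq H_i$ for all $i$ implies $\{G_1,\ldots,G_n\}\eq\{H_1,\ldots,H_n\}$), with a sound proof by induction on $X$ via Theorem \ref{thm:outcome}. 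Second, where the paper handles the $\sL+\sR$ and $\sR+\sR$ cases by saying they are ``shown in similar ways,'' you derive them from the first case using $\conj{\sL_a}\iso\sR_a$, Proposition \ref{prop:rconj}, and the evident congruence $G\eq H\Rightarrow G+J\eq H+J$; this avoids repeating the induction and makes the dependence on conjugation explicit. All the auxiliary facts you invoke ($\conj{\sL_a}\iso\sR_a$, $\sL_a+\sR\iso\sR_a$, associativity/commutativity of the sum) are either immediate inductions or already established in the paper, so there is no gap.
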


\begin{proof}
    We prove this by induction on $a_1 + a_2$. When $a_1 = a_2 = 0, $ the statement is trivial by Axiom \ref{axm:base}.
    Consider the case $a_1 + a_2 > 0$. From the induction hypothesis, Theorem \ref{mex}, and Lemma \ref{lem:mex},
    \begin{eqnarray*}
    \sL_{a_1} + \sL_{a_2} &\iso& \{\sL_0 + \sL_{a_2}, \ldots, \sL_{a_1 -1 } + \sL_{a_2}, \sL_{a_1} + \sL_0, \ldots, \sL_{a_1} + \sL_{a_2 - 1} \} \\
    & \eq & \{\sL_{0 \oplus a_2}, \ldots, \sL_{(a_1 - 1)\oplus a_2}, \sL_{a_1 \oplus 0}, \ldots, \sL_{a_1 \oplus (a_2 - 1)}\} \\
    & \eq & \sL_{{\rm mex}(\{0\oplus a_2, \ldots, (a_1 - 1) \oplus a_2, a_1 \oplus 0, \ldots, a_1 \oplus (a_2 - 1)\})} \\
    & \iso & \sL_{a_1 \oplus a_2}.
    \end{eqnarray*}

    The remaining $\sL_{a_1} + \sR_{a_2} \eq \sR_{a_1 \oplus a_2}$ and $\sR_{a_1} + \sR_{a_2} \eq \sL_{a_1 \oplus a_2}$ are shown in similar ways.
    \qed
\end{proof}

\begin{definition}\label{kome}
    For any positive integer $n$, 
    $$\kome_n \iso \{\sL_{n-1}, \sR_{n-1}, \ldots, \sL, \sR\}.$$
\end{definition}
When we consider a disjunctive sum of $\kome_n$, we need to calculate the nim-sum
 of their sizes, while ignoring the smallest two values.   
\begin{theorem}
\label{thm:kome}
For any positive integers $a_1, \ldots, a_n$ and nonnegative integers $b_1, \ldots, b_\ell, c_1, \ldots, c_r$,
    let $G \iso \kome_{a_1} + \cdots + \kome_{a_n} + \sL_{b_1} + \cdots + \sL_{b_\ell} + \sR_{c_1} + \cdots + \sR_{c_r}$ and assume that $a_1 \leq \cdots \leq a_n$. Then,
    $$
    o(G) = \left\{ \begin{array}{cc}
        \oL & (n = 0 \text{ and } r \text { is even}) \\
        \oR & (n = 0 \text { and } r \text{ is odd}) \\
        \oN & (n = 1, \text{ or } n \geq 2 \text { and } a_3 \oplus \cdots \oplus a_n \oplus b_1 \oplus \cdots \oplus b_\ell \oplus c_1 \oplus \cdots \oplus c_r \neq 0) \\
        \oP & (n \geq 2 \text { and } a_3 \oplus \cdots \oplus a_n \oplus b_1 \oplus \cdots \oplus b_\ell \oplus c_1 \oplus \cdots \oplus c_r = 0).
    \end{array}\right.
    $$
    
\end{theorem}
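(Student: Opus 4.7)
The plan is to induct on the birthday $b(G)$, splitting into cases by the value of $n$. The case $n = 0$ is immediate from repeated application of Corollary \ref{lrn}: the sum collapses to $\sL_s$ when $r$ is even and to $\sR_s$ when $r$ is odd, for $s$ the XOR of all $b_j$ and $c_k$, giving outcomes $\oL$ and $\oR$ respectively. For $n = 1$, I would exhibit explicit winning first moves inside $\kome_{a_1}$: converting $\kome_{a_1}$ to $\sL_0$ produces a position with $r$ $\sR$-terms, while converting it to $\sR_0$ produces one with $r+1$ $\sR$-terms. Combining with the $n=0$ computation, Left wins as first player by choosing whichever of the two options yields an even $\sR$-count, and Right wins analogously by choosing the other; hence both an $\oL\cup\oP$ option and an $\oR\cup\oP$ option exist, so Theorem \ref{thm:outcome} gives $G\in\oN$.

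For the main case $n \geq 2$, the key quantity is $T = a_3 \oplus \cdots \oplus a_n \oplus b_1 \oplus \cdots \oplus b_\ell \oplus c_1 \oplus \cdots \oplus c_r$. When $T \neq 0$, let $h$ be the highest set bit of $T$; some summand contributing to $T$ carries bit $h$, and this identifies a Nim-style move. If a $b_j$ carries bit $h$, the move $\sL_{b_j} \to \sL_{b_j \oplus T}$ preserves $n$ and forces the new invariant to $0$; an analogous move handles a $c_k$; and if only some $a_i$ with $i \ge 3$ carries bit $h$, converting $\kome_{a_i}$ to $\sL_{a_i\oplus T}$ (or $\sR_{a_i\oplus T}$) reduces $n$ to $n-1 \ge 2$ and again yields $T'=0$. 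Note that when $n=2$ the invariant $T$ contains no $a_i$ at all, so the witness is necessarily a $b$ or a $c$ and $n$ is preserved, ensuring the Nim move always exists. The resulting position has $n' \ge 2$ and $T'=0$, so by the induction hypothesis it is in $\oP$; since $\oP\subseteq \oL\cup\oP$ and $\oP\subseteq \oR\cup\oP$, Theorem \ref{thm:outcome} gives $G\in\oN$.

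When $T=0$ and $n\ge 2$, I would verify every option of $G$ lies in $\oN$ and conclude $G\in\oP$ via Theorem \ref{thm:outcome}. Moves inside some $\sL_{b_j}$ or $\sR_{c_k}$ keep $n$ fixed and force $T'\neq 0$, so induction closes them. A move converting $\kome_{a_i}$ to $\sL_j$ or $\sR_j$ (with $j<a_i$) decreases $n$ to $n-1$: when $i \ge 3$ the ignored pair $\{a_1,a_2\}$ is untouched and $T'=a_i\oplus j\neq 0$; when $i\in\{1,2\}$ the new ignored pair becomes $\{a_2,a_3\}$ or $\{a_1,a_3\}$ and the new invariant is $T'=a_3\oplus j$, which is nonzero because $j<a_i\le a_3$. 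Whenever $n-1\ge 2$ induction finishes the sub-case, and when $n-1=1$ the $n=1$ analysis above does. The main technical obstacle is precisely this bookkeeping when removing one of the two smallest $\kome$-terms, where the set of indices entering the XOR shifts; the reason it all works out is the strict inequality $j<a_i\le a_3$, which forces $T'\neq 0$ at exactly the step where we need it.
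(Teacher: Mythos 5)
Your proof is correct and follows essentially the same strategy as the paper's: the same case split on $n$ and on whether the truncated nim-sum $T$ vanishes, the same Nim-style move supplied by Lemma \ref{lem:mex}, and the same key observation that a move from $\kome_{a_1}$ or $\kome_{a_2}$ to $\sL_j$ or $\sR_j$ forces the new invariant $a_3 \oplus j \neq 0$ because $j < a_i \leq a_3$. The only difference is that you induct on the birthday rather than on $n$ together with the sum of the indices, which is immaterial.
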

\begin{proof}
If $n = 0$, then the statement clearly holds. Thus, we assume that $n \geq 1$. If $n = 1$, then the first player wins by moving from $\kome a_1$ to either $\sL$ or $\sR$. 
When $n \geq 2$, we prove the statement by induction on $n$. Furthermore, for the same $n$, we use induction on $a_1 + \cdots + a_n + b_1 + \cdots + b_\ell + c_1 + \cdots + c_r$.
If $n = 2$, we assume that $b_1 \oplus \dots \oplus b_\ell \oplus c_1 \oplus \dots \oplus c_r = 0$. If the first player moves one of $\sL_{b_1}, \dots, \sL_{b_\ell}, \sR_{c_1}, \dots, \sR_{c_r}$, then by Lemma \ref{lem:mex}, the value $b_1 \oplus \dots \oplus b_\ell \oplus c_1 \oplus \dots \oplus c_r$ changes, so it is no longer equal to $0$. Thus, by the induction hypothesis, the outcomes of these options are $\oN$. If the first player moves either $\kome a_1$ or $\kome a_2$, then by the result for the case $n = 1$, the outcomes of these options are $\oN$. Hence, if $b_1 \oplus \dots \oplus b_\ell \oplus c_1 \oplus \dots \oplus c_r = 0$, then the outcome of this position is $\oP$.
If $b_1 \oplus \dots \oplus b_\ell \oplus c_1 \oplus \dots \oplus c_r \neq 0$, then by Lemma \ref{lem:mex}, there exists a move that makes the nim-sum 0. 
Thus, the outcome of this position is $\oN$.
If $n > 2$, we assume that $a_3 \oplus \dots \oplus a_n \oplus b_1 \oplus \dots \oplus b_\ell \oplus c_1 \oplus \dots \oplus c_r = 0$. If the first player moves one of $\kome a_3, \dots, \kome a_n, \sL_{b_1}, \dots, \sL_{b_\ell}, \sR_{c_1}, \dots, \sR_{c_r}$, then by Lemma \ref{lem:mex}, the nim-sum changes, and is no longer equal to $0$. 
Thus, by the induction hypothesis, each of these options has outcome $\oN$. If the first player moves either $\kome a_1$ or $\kome a_2$, without loss of generality, we assume that the first player moves from $\kome a_2$ to $\sL_j$. Since $j < a_2 \leq a_3$, moving from $\kome a_3$ to $\sL_j$ is an available move. Thus, $j \oplus a_3 \neq 0$ and it follows that $j \oplus a_4 \oplus \dots \oplus a_n \oplus b_1 \oplus \dots \oplus b_\ell \oplus c_1 \oplus \dots \oplus c_r = (j \oplus a_3) \oplus a_3 \oplus a_4 \oplus \dots \oplus a_n \oplus b_1 \oplus \dots \oplus b_\ell \oplus c_1 \oplus \dots \oplus c_r \neq 0$. Hence, by the induction hypothesis, the outcome of this option is $\oN$. Therefore, if $a_3 \oplus \dots \oplus a_n \oplus b_1 \oplus \dots \oplus b_\ell \oplus c_1 \oplus \dots \oplus c_r = 0$, then the outcome of this position is $\oP$.
If $a_3 \oplus \dots \oplus a_n \oplus b_1 \oplus \dots \oplus b_\ell \oplus c_1 \oplus \dots \oplus c_r \neq 0$, then by Lemma \ref{lem:mex} and Definition \ref{kome}, there exists a move that makes the nim-sum 0. Therefore, the outcome of this position is $\oN$.
\qed
\end{proof}

We also have the following values, which can be calculated the outcome by using nim-sum
.

\begin{definition}
$\stap 1 \iso \{\sL, \sR\}$ and 
    for any positive integer $n >1$, 
    $$\stap n \iso \{ \stap (n-1), \ldots, \stap 1, \sL, \sR  \}.$$
\end{definition}

Note that $\stap n = \overline{\stap n} = \stap n + \sR$.
\begin{theorem}
For any positive integers $a_1, \ldots, a_n,$
    $\stap a_1 + \cdots + \stap a_n \in \oP$ if and only if $a_1 \oplus \cdots \oplus a_n = 0$. 
\end{theorem}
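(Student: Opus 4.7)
The plan is to reduce the statement to Bouton's theorem for ordinary Nim. I will begin by showing that $G := \stap a_1 + \cdots + \stap a_n$ satisfies $G + \sR \iso G$: using the note $\stap a \iso \overline{\stap a} \iso \stap a + \sR$ that precedes the theorem, together with associativity,
\[
G + \sR \iso \stap a_1 + \cdots + \stap a_{n-1} + (\stap a_n + \sR) \iso G.
\]
Proposition \ref{prof:plussr} then rules out $o(G) \in \{\oL, \oR\}$, since the clauses $G \in \oL \Leftrightarrow G + \sR \in \oR$ and $G \in \oR \Leftrightarrow G + \sR \in \oL$ would require $G$ to lie simultaneously in $\oL$ and $\oR$. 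Hence $o(G) \in \{\oN, \oP\}$, and it will suffice to distinguish between these two outcomes.

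Next I will enumerate the options of $G$. Writing $G_i^* \iso \stap a_1 + \cdots + \widehat{\stap a_i} + \cdots + \stap a_n$ with the $i$-th summand omitted, the options at coordinate $i$ are: (i) $\stap a_1 + \cdots + \stap b + \cdots + \stap a_n$ for each $1 \le b < a_i$; (ii) moving $\stap a_i$ to $\sL$, which equals $G_i^* + \sL \iso G_i^*$ by Proposition \ref{prop:rconj}; and (iii) moving $\stap a_i$ to $\sR$, which equals $G_i^* + \sR$. When $n \ge 2$, $G_i^*$ is itself a nonempty sum of $\stap$'s, and the argument of the previous paragraph applied to $G_i^*$ gives $G_i^* + \sR \iso G_i^*$, so (iii) coincides with (ii) up to $\iso$. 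Thus, up to $\iso$, the options of $G$ match exactly the legal moves of ordinary Nim on piles $a_1, \ldots, a_n$: each $a_i$ may be replaced by any integer in $\{0, 1, \ldots, a_i - 1\}$, with the choice $b = 0$ corresponding to the removal option (ii)/(iii).

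I will then finish by Bouton's induction on $a_1 + \cdots + a_n$. If $a_1 \oplus \cdots \oplus a_n = 0$, every option alters exactly one $a_i$ to a strictly smaller value and so produces a nonzero nim-sum; by the inductive hypothesis each option is in $\oN$, and Theorem \ref{thm:outcome} gives $G \in \oP$. If $a_1 \oplus \cdots \oplus a_n \ne 0$, Lemma \ref{lem:mex} supplies an index $i$ and a value $0 \le b < a_i$ such that the resulting nim-sum is $0$; the corresponding option is in $\oP$ by induction, and combined with $o(G) \in \{\oN, \oP\}$ this forces $G \in \oN$.

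The main delicate point will be bookkeeping in the small cases. The base $n = 1$ must be handled directly: $G = \stap a_1 \in \oN$ because Left moves to $\sL$ and Right to $\sR$, in agreement with the nim-sum $a_1 \ge 1$ being nonzero. When the Nim-winning removal move ($b = 0$) is invoked, the residual has $n - 1$ piles with nim-sum $0$; if $n - 1 = 1$ this would force the lone surviving $a_j$ to satisfy $a_j = 0$, contradicting $a_j \ge 1$, so in fact $n - 1 \ge 2$ whenever $b = 0$ is chosen, and the inductive hypothesis applies cleanly.
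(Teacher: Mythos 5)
Your proof is correct and follows essentially the same Bouton-style route as the paper: identify the $\sL$/$\sR$ options of a summand with removal of a Nim pile via $\stap a_i + \sR \iso \stap a_i$ and Proposition \ref{prop:rconj}, then apply Lemma \ref{lem:mex} and Theorem \ref{thm:outcome}. Your single induction on $a_1 + \cdots + a_n$ is in fact a little cleaner than the paper's induction on $n$, which leaves the ``move to $\stap a_i'$ within the same number of summands'' case to an implicit inner induction.
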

\begin{proof}
We proceed by induction on $n$. If $n = 1$, then the first player wins by moving to either $\sL$ or $\sR$. 

We assume that $\stap a_1 + \cdots + \stap a_n \in \oP$ if and only if $a_1 \oplus \cdots \oplus a_n = 0$. 
If $a_1 \oplus \cdots \oplus a_n \oplus a_{n + 1} = 0$ and the first player moves from $\stap a_i$ to either $*L$ or $*R$, then, by Proposition \ref{prop:rconj}, 
we have $\stap a_1 + \cdots + *L + \cdots + \stap a_{n+1} = \stap a_1 + \cdots + *R + \cdots + \stap a_{n+1} $. By the induction hypothesis and Lemma \ref{lem:mex}, we obtain $\stap a_1 + \cdots + *L + \cdots + \stap a_{n+1} \in \oN$.
If $a_1 \oplus \cdots \oplus a_n \oplus a_{n + 1} = 0$ and the first player moves from $\stap a_i$ to $\stap a_i'$, then $\stap a_1 + \cdots + \stap a_i' + \cdots + \stap a_{n+1} \in \oN$, since, by Lemma \ref{lem:mex}, $a_1 \oplus \cdots \oplus a_i' \oplus \cdots \oplus a_n \oplus a_{n + 1} \neq 0$ holds.
Thus, if $a_1 \oplus \cdots \oplus a_n \oplus a_{n + 1} = 0$, then $\stap a_1 + \cdots + \stap a_n + \stap a_{n + 1} \in \oP$.

Conversely, if $a_1 \oplus \cdots \oplus a_n \oplus a_{n + 1} \neq 0$, then by  Lemma \ref{lem:mex}, there exists some $i$ such that either

     $\cdot$ moving from $\stap a_i$ to $\stap a_i'$ satisfies $a_1 \oplus \cdots \oplus a_i' \oplus \cdots \oplus a_n \oplus a_{n + 1} = 0$
     
or 

    $\cdot$ moving from $\stap a_i$ to $*L$ satisfies $a_1 \oplus \cdots \oplus a_{i - 1} \oplus a_{i + 1} \oplus \cdots \oplus a_n \oplus a_{n + 1} = 0$.

  It follows that if $a_1 \oplus \cdots \oplus a_n \oplus a_{n + 1} \neq 0$, then $\stap a_1 + \cdots + \stap a_n + \stap a_{n + 1} \in \oN$.
  \qed
\end{proof}

\subsection{Simplification methods}
In combinatorial game theory, it is important to determine if a position can be replaced with another position that has a simpler structure. For example, in partisan games under the normal play convention, two simplification
 methods, ``removing dominated options'' and ``bypassing reversible options'',  are known.
For $\LR$-ending partisan rulesets, we show
 the following two simplification
 methods. 
\begin{theorem}\label{thm.bypass}
    Let $G = \{G_1, \ldots, G_n\}$, where $n \geq 1$. 
    We assume that $G_1 \eq \{*L\}$ and that for any  $G_j = \{G_{j1}, \ldots, G_{jm}\}$, there exists an option $G_{j\ell}$ satisfying $G_{j\ell} \eq *L$. Then $G \eq *L$ holds.
    Similarly, the statement holds when $\sL$ is replaced by $\sR$.
\end{theorem}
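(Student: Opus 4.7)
The plan is to prove $o(G + X) = o(X)$ for every position $X$ by induction on $b(X)$; since $\sL + X \iso X$ by Proposition \ref{prop:rconj}, this is exactly the assertion $G \eq \sL$.

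For the base case $X = \sL$ we have $G + \sL \iso G$, so I must verify $o(G) = \oL$ directly. Since $G_1 \eq \{\sL\}$, and $\{\sL\} \in \oL$ (its only option $\sL$ is in $\oL$, and Theorem \ref{thm:outcome} applies), we get $o(G_1) = \oL$. Each further option $G_j$ contains an option $G_{j\ell} \eq \sL$ of outcome $\oL$, and Theorem \ref{thm:outcome} excludes $G_j$ from $\oR$ and $\oP$, giving $G_j \in \oL \cup \oN$. Hence $G$ has all options in $\oL \cup \oN$ with the option $G_1 \in \oL$, and Theorem \ref{thm:outcome} yields $G \in \oL$. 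The case $X = \sR$ then follows from Proposition \ref{prof:plussr}.

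For the inductive step with $X = \{X_1, \ldots, X_m\}$ nonterminal, the options of $G + X$ split into ``$X$-moves'' $G + X_k$, handled by the IH as $o(G + X_k) = o(X_k)$, and ``$G$-moves'' $G_i + X$. The key uniform fact for $G$-moves is that there is always an accessible option of outcome $o(X)$: for $i \ge 2$, the option $G_{i\ell} + X$ of $G_i + X$ has outcome $o(\sL + X) = o(X)$ because $G_{i\ell} \eq \sL$; for $i = 1$ I route through $G_1 \eq \{\sL\}$ to get $o(G_1 + X) = o(\{\sL\} + X)$, and $\{\sL\} + X$ has the literal option $\sL + X \iso X$ of outcome $o(X)$. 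By Theorem \ref{thm:outcome}, an option of outcome $\oL$ (resp.\ $\oR$) forces the parent into $\oL \cup \oN$ (resp.\ $\oR \cup \oN$), and an option of outcome $\oP$ forces the parent into $\oN$, since $\oP$ lies in both $\oL \cup \oP$ and $\oR \cup \oP$.

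These ingredients close each of the four cases of $o(X)$ routinely. If $o(X) = \oL$, every option of $G + X$ lies in $\oL \cup \oN$ and the $X$-move $G + X^*$ for an option $X^* \in \oL$ of $X$ supplies the needed $\oL$-option; the case $o(X) = \oR$ is symmetric. If $o(X) = \oN$, the two $X$-moves $G + X^L$ and $G + X^R$ furnish options in $\oL \cup \oP$ and $\oR \cup \oP$ all on their own. If $o(X) = \oP$, every $X$-move is in $\oN$ by IH while every $G$-move is in $\oN$ via the $\oP$-option argument, so $G + X \in \oP$. The main obstacle is the handling of $G_1$: we have no direct structural access to its options and must route through $\{\sL\} + X$ using the equivalence, whereas the other $G$-moves are handled uniformly via the structural option $G_{i\ell} \eq \sL$. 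The statement with $\sL$ replaced by $\sR$ follows by conjugation, using $\overline{\{\sL\}} \iso \{\sR\}$, $\overline{\sL} \iso \sR$, and the preservation of $\eq$ by conjugation.
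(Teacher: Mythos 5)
Your proposal is correct and takes essentially the same route as the paper: an induction on $X$ showing $o(G+X)=o(\sL+X)$, with the induction hypothesis handling the $X$-moves and the options $G_{j\ell}\eq\sL$ supplying Left's replies to the $G$-moves. Your four-way split on $o(X)$, your (in fact slightly more careful) routing of the $G_1$-move through $\{\sL\}+X$, and your conjugation argument for the $\sR$-version (which needs only that conjugation preserves $\eq$, immediate from $\conj{H}\iso H+\sR$ in Proposition \ref{prop:rconj} together with associativity) are cosmetic variants of the paper's argument via the $\oL\cup\oP$-biconditional combined with Proposition \ref{prof:plussr}.
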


\begin{proof}
    We prove that $o(G + X) = o(*L + X)$ holds for all positions $X$, by induction on $X$.
    If $X = \sL$, then we have $*L + *L = *L \in \oL$. Hence, we need to show that $G + *L \in \oL$. If Left plays first, then Left can win by moving from $G + *L$ to $G_1 + *L$. If Right plays first, suppose that Right moves to $G_j + *L$. Then Left can win by moving from $G_j + *L$ to $G_{j\ell} + *L \eq *L$. Therefore, $G + \sL \in \oL$. Similarly, if $X = \sR$, then $G + *R \in \oR$.
    
    For position $X$, we assume that any option $X'$ satisfies $o(G + X^\prime) = o(*L + X^\prime)$.
    If $G + X \in \oL \cup \oP$, then $G + X^\prime \in \oL \cup \oN$ holds. By the induction hypothesis, it follows that $*L + X^\prime \in \oL \cup \oN$. Since any option is in $\oL \cup \oN$, it follows that $\sL + X \in \oL \cup \oP$.

    Conversely, if $\sL + X \in \oL \cup \oP$, then $\sL + X^\prime \in \oL \cup \oN$. By the induction hypothesis, it follows that $G + X^\prime \in \oL \cup \oN$. Moreover, from the assumption, for any $G_j + X (j = 1, 2, \dots, n)$, since $G_{j\ell} + X \eq *L + X \in \oL \cup \oP$, we have $G_j + X \in \oL \cup \oN$. Therefore, $G + X \in \oL \cup \oP$ holds.

    By Proposition \ref{prof:plussr}, by replacing $X$ by $X + \sR$, we have $G + X \in \oR \cup \oP \iff *L + X \in \oR \cup \oP$.

Therefore, we have
$$\begin{cases}
    G + X \in \oL \cup \oP \iff *L + X \in \oL \cup \oP\\
    G + X \in \oR \cup \oP \iff *L + X \in \oR \cup \oP.
\end{cases}$$
By contraposition, it follows that $$o(G + X) = o(\sL + X).$$

Similarly, if $G_1 \equiv \{\sR\}$ and for any  $G_j = \{G_{j1}, \ldots, G_{jm}\}$, there exists an option $G_{j\ell}$ satisfying $G_{j\ell} \eq *R$, then $G \eq *R$.
\qed
\end{proof}

\begin{theorem}\label{thm.dominate}
      Let $G = \{G_1, \ldots, G_n\}$, where $n \geq 1$. We assume that for any $G_j = \{G_{j1}, \ldots, G_{jm}\} (j \geq 2)$, there exists an option $G_{j\ell}$ satisfying $G_{j\ell} \eq \{G_1\}$. Then $G \eq \{G_1\}$ holds.
\end{theorem}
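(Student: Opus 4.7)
My plan is to set $H := \{G_1\}$ and prove $G \eq H$ directly from Definition \ref{def:equiv}, by showing $o(G+X) = o(H+X)$ for every position $X$ through induction on the birthday $b(X)$. By Theorem \ref{thm:outcome} the outcome of a position is completely determined by whether it belongs to $\oL \cup \oP$ (``Left wins as second player'') and whether it belongs to $\oR \cup \oP$ (``Right wins as second player''), so I reduce the task to proving, for every $X$,
\[
G + X \in \oL \cup \oP \iff H + X \in \oL \cup \oP
\qquad \text{and} \qquad
G + X \in \oR \cup \oP \iff H + X \in \oR \cup \oP.
\]
I will run the induction with both equivalences in parallel so that the induction hypothesis supplies $G + X' \eq H + X'$ in both directions for every option $X'$ of $X$.

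For the ($\Rightarrow$) direction of the first equivalence, assuming $G+X \in \oL \cup \oP$, every option of $G+X$ lies in $\oL \cup \oN$. The options of $H+X$ are $G_1+X$ (which is already an option of $G+X$) and the positions $H+X'$ for options $X'$ of $X$; by the induction hypothesis each such $H+X' \eq G+X' \in \oL \cup \oN$, so every option of $H+X$ is in $\oL \cup \oN$ and $H+X \in \oL \cup \oP$. The analogous forward direction for $\oR \cup \oP$ is identical.

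The real content lies in the ($\Leftarrow$) direction. Assume $H+X \in \oL \cup \oP$; I need to check that every option of $G+X$ is in $\oL \cup \oN$. The option $G_1+X$ is in $\oL \cup \oN$ because it is already an option of $H+X$; for each option $X'$ of $X$, the induction hypothesis gives $G+X' \eq H+X' \in \oL \cup \oN$. The remaining options are $G_j + X$ with $j \ge 2$, which is where the hypothesis of the theorem is used: $G_j$ has an option $G_{j\ell}$ with $G_{j\ell} \eq H$, hence $G_{j\ell} + X \eq H + X \in \oL \cup \oP$, and since $G_{j\ell}+X$ is itself an option of $G_j+X$, Left can play from $G_j+X$ to this option and then win as the incoming second player. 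Therefore $G_j + X \in \oL \cup \oN$, and we conclude $G+X \in \oL \cup \oP$.

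The step I expect to be the main obstacle is precisely this reversibility move for $G_j + X$ with $j \ge 2$: it is the one place where the structural hypothesis ``each $G_j$ with $j\ge 2$ has an option equivalent to $\{G_1\}$'' is invoked, and it must be organised so that the conclusion ``Left wins from $G_j + X$ as first player'' gets packaged as ``every option of $G+X$ is in $\oL \cup \oN$''. Once this reversible-option step is verified, the parallel argument for $\oR \cup \oP$ is obtained by swapping the roles of Left and Right throughout, and the induction closes.
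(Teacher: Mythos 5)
Your proof is correct and follows essentially the same route as the paper's: induction on $X$, reduction of outcomes to membership in $\oL\cup\oP$ and $\oR\cup\oP$, with the theorem's hypothesis invoked exactly once to show $G_j+X\in\oL\cup\oN$ via the option $G_{j\ell}+X\eq\{G_1\}+X$. The only cosmetic difference is that the paper derives the $\oR\cup\oP$ equivalence from the $\oL\cup\oP$ one by replacing $X$ with $X+\sR$ and applying Proposition \ref{prof:plussr}, whereas you rerun the argument with Left and Right swapped; both are valid.
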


\begin{proof}
We prove that $o(G + X) = o(\{G_1\} + X)$ for all positions $X$. We only show that $G + X \in \oL \cup \oP \iff \{G_1\} + X \in \oL \cup \oP$.
If $X$ is a terminal position and $G + X \in \oL \cup \oP$, then for any option $G_j + X$, we have $G_j + X \in \oL \cup \oN$. In particular, $G_1 + X \in \oL \cup \oN$ holds. Therefore, any option for $\{G_1\} + X$ is in $\oL \cup \oN$, and hence $\{G_1\} + X \in \oL \cup \oP$.

Conversely, if $\{G_1\} + X \in \oL \cup \oP$, then $G_1 + X \in \oL \cup \oN$ holds. For $G + X$, note that for each $j = 2, \dots, n$, the option $G_j + X$ has a further option $G_{j\ell} + X$ satisfying $G_{j\ell} + X \eq \{G_1\} + X \in \oL \cup \oP$. Hence $G_j + X \in \oL \cup \oN$, and thus $G + X \in \oL \cup \oP$.

Now, we assume that for every option $X'$ of $X$, we have $o(G + X^\prime) = o(\{G_1\} + X^\prime)$.
   If $G + X \in \oL \cup \oP$, then since both $G_1 + X$ and $G + X'$ are options of $G + X$, we have $G_1 + X, G + X^\prime \in \oL \cup \oN$. Thus, $\{G_1\} + X \in \oL \cup \oP$ holds.

   Conversely, if $\{G_1\} + X \in \oL \cup \oP$, then $G_1 + X \in \oL \cup \oN$ holds. From the induction hypothesis, we have $G + X^\prime \eq \{G_1\} + X^\prime \in \oL \cup \oN$. For $G + X$, since any option $G_j + X, (j = 2, \dots, n)$ has an option $G_{j\ell} + X \eq \{G_1\} + X \in \oL \cup \oP$, we have $G_j + X \in \oL \cup \oN$. Therefore, $G + X \in \oL \cup \oP$ holds.
   
   By Proposition \ref{prof:plussr}, $G + X \in \oR \cup \oP$ if and only if $G + X + \sR  \in \oL \cup \oP$ holds. Furthermore, $\{G_1\} + X \in \oR \cup \oP$ if and only if $\{G_1\} + X + \sR  \in \oL \cup \oP$ holds. Hence, $G + X \in \oR \cup \oP$ if and only if $\{G_1\} + X \in \oR \cup \oP$.
  Therefore, $o(G + X) = o(\{G_1\} + X)$.
   \qed
\end{proof}

\section{$\LR$-Ending partisan rulesets and their values}
\label{sec:examples}

In this section, we introduce two
 ending partisan rulesets and show some analysis of the rulesets by using the values shown in Section \ref{sec:values}.

\subsection{{\sc Even nim}}
In {\sc even nim}, there are some piles of {\em even} tokens. Each player, in their turn, chooses a pile and removes some tokens. For every pile, when it is chosen for the first time in the game, any positive number of tokens may removed
. However, after that, when the pile is chosen again, an arbitrary {\em even} number of tokens must be
 removed.
When there are no legal moves, if the total number of tokens is even, Left wins, and if it is odd, Right wins.

\begin{theorem}
    Let $ (a_1, \ldots, a_n)$ be an initial position in {\sc even nim} such that there are $n$ piles whose numbers of tokens are $a_1, a_2, \ldots,$ and $a_n$. We assume that $a_1 \le \cdots \le a_n$. Note that $a_1, \ldots, a_n$ are nonnegative even integers.

    Then the outcome of the initial position is as follows:

    $$
     \left \{
    \begin{array}{cc}
        \oN & (n = 1, \text{ or } n \geq 2 \text { and } a_3 \oplus \cdots \oplus a_n  \neq 0) \\
        \oP & (n \geq 2 \text { and } a_3 \oplus \cdots \oplus a_n  = 0).
    \end{array}
    \right .
    $$
    
\end{theorem}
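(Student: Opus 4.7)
My plan is to reduce {\sc even nim} to a disjunctive sum of $\kome$ positions and then invoke Theorem~\ref{thm:kome}. I will classify piles into two types: a \emph{fresh} pile is one from which no token has yet been removed (so the first move may remove any positive number of tokens), while a \emph{used} pile is one from which at least one move has already occurred (so only an even number of tokens may be removed).

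First, by induction on $m$, a used pile of $m$ tokens has game value equivalent to $\sL_{m/2}$ when $m$ is even and to $\sR_{(m-1)/2}$ when $m$ is odd. The bases $m = 0$ and $m = 1$ are immediate, since $\sL_0 \iso \sL$ and $\sR_0 \iso \sR$ and these piles are terminal (a used $0$-pile contributes $0$ tokens and a used $1$-pile contributes $1$ token). For the inductive step, a used pile of $2j$ tokens has options which are used piles of sizes $2j-2, 2j-4, \ldots, 0$, whose values by induction form the set $\{\sL_{j-1}, \sL_{j-2}, \ldots, \sL_0\}$, matching the options of $\sL_j$; the odd case is analogous. This step uses the (standard and implicit) fact that $\eq$ is a congruence under the option-set operation.

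Next, I will show that a fresh pile of $2k$ tokens (for $k \geq 1$) has value equivalent to $\kome_k$. Its options, obtained by removing $1, 2, \ldots, 2k$ tokens, are used piles of sizes $2k-1, 2k-2, \ldots, 1, 0$, whose values by the previous step are $\sR_{k-1}, \sL_{k-1}, \sR_{k-2}, \sL_{k-2}, \ldots, \sR_0, \sL_0$ respectively, i.e.\ exactly the option set of $\kome_k$ in Definition~\ref{kome}. Therefore the initial position $(a_1, \ldots, a_n)$ has game value equivalent to $\kome_{a_1/2} + \cdots + \kome_{a_n/2}$, with any $a_i = 0$ summand absorbed as $\sL$ via Proposition~\ref{prop:rconj}.

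Finally, applying Theorem~\ref{thm:kome} with $\ell = r = 0$ yields the outcome in terms of $(a_3/2) \oplus \cdots \oplus (a_n/2)$; since doubling commutes with XOR, this quantity vanishes if and only if $a_3 \oplus \cdots \oplus a_n = 0$, matching the stated formula, and the $n = 1$ case is handled directly by Theorem~\ref{thm:kome}. The main obstacle is the careful identification of fresh and used pile values with the corresponding $\kome_k$, $\sL_j$, and $\sR_j$ normal forms, because the fresh-versus-used distinction is a state that depends on play history rather than the token count alone; once those identifications are in place, Theorem~\ref{thm:kome} does all the remaining game-theoretic work.
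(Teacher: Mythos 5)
Your proof follows essentially the same route as the paper's: it identifies an untouched (initial) pile of $2k \ge 2$ tokens with $\kome_k$ and a touched (non-initial) pile of $m$ tokens with $\sL_{m/2}$ or $\sR_{(m-1)/2}$, writes the initial position as a sum of $\kome$'s, and then applies Theorem \ref{thm:kome} together with the observation that halving every entry does not change whether the nim-sum vanishes. The only cosmetic difference is your appeal to a congruence property of $\eq$ under forming option sets, which is not actually needed here: the pile identifications already hold at the level of isomorphism $\iso$, exactly as in the paper's argument.
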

\begin{proof}
    Let $G$ be a position in {\sc even nim}. Assume that in $G$, there is only one pile whose number of tokens is $a$. We show that the value of $G$ is 
    $$
    \left \{
    \begin{array}{cc}
        \sL & (G \text{ is initial and } a = 0) \\
        \kome_{\frac{a}{2}} & (G \text{ is initial and } a \ge 2) \\
        \sL_{\frac{a}{2}} & (G \text{ is not initial and $a$ is even}) \\
        \sR_{\frac{a- 1}{2}} &(G \text{ is not initial and $a$ is odd})
    \end{array}
    \right .
    $$
    We proceed by induction on $a$.
    For $G$, if $a = 0$, then it is a terminal position and Left wins. Thus, by definition, the value of $G$ is $\sL$.
    If $a = 1$, then $G$ is not initial and has no option, so Right wins. Thus, by definition, the value of $G$ is $\sR$.
    
    We assume that $G$ is neither initial nor terminal.
    If $a$ is even, then $G$ has $\frac{a}{2}$ options, and by the induction hypothesis, their values are $\sL, \sL_1, \dots, \sL_{\frac{a - 2}{2}}$.
    By Theorem \ref{mex}, we have $$G \iso \{\sL, \sL_1, \dots, \sL_{\frac{a - 2}{2}}\} \eq \sL_{\frac{a}{2}}.$$
    Similarly, if $a$ is odd, then $G = \sR_{\frac{a - 1}{2}}$.
    
    Finally, if $G$ is initial and $a \geq 2$, then there are $a$ options and by the induction hypothesis, their values are $\sL, \sL_1, \dots, \sL_{\frac{a - 2}{2}}, \sR, \sR_1, \dots, \sR_{\frac{a - 2}{2}}$.
    It follows that 
    $$G = \{\sL, \sL_1, \dots, \sL_{\frac{a - 2}{2}}, \sR, \sR_1, \dots, \sR_{\frac{a - 2}{2}}\} = \kome_{\frac{a}{2}}.$$

    Therefore, the value of the position $(a_1, \ldots, a_n)$ in {\sc even nim} is
    $$
    \kome_{\frac{a_1}{2}} + \cdots + \kome_{\frac{a_n}{2}}
    $$
    and from Theorem \ref{thm:kome}, since $\frac{a_3}{2} \oplus \cdots \oplus \frac{a_n}{2} = 0$ if and only if $a_3 \oplus \cdots \oplus a_n = 0,$ the outcome is
        $$
     \left \{
    \begin{array}{cc}
        \oN & (n = 1, \text{ or } n \geq 3 \text { and } a_3 \oplus \cdots \oplus a_n  \neq 0) \\
        \oP & (n = 2, \text{ or } n \geq 3 \text { and } a_3 \oplus \cdots \oplus a_n  = 0).
    \end{array}
    \right .
    $$
    \qed
\end{proof}


\subsection{$\LR$-ending partisan {\sc subtraction games}}
In {\sc subtraction games}, there are some piles of tokens and a fixed set of removable numbers $S$. Each player, in their turn, chooses a pile and removes $s \in S$ tokens. 
In $\LR$-ending partisan {\sc subtraction games}, when there are no legal moves, if the total number of tokens is even, Left wins, and if it is odd, Right wins.

\begin{theorem}
    Let $S = \{2, 5\}$. Assume that in $G$ there is only one pile whose number of tokens is $n$. The value of $G$ is as follows. The sequence of values exhibits a periodic structure with period 7.
    \begin{table*}[h]
\centering
\label{tbl:2_5}
  \begin{tabular}{c|ccccccc}
     $n$& $0$& $1$& $2$& $3$& $4$& $5$& $6$  \\ \hline
    $0+$ & $*L$ &$*R$& $\{*L\}$& $\{*R\}$& $*L$& $\{*L, \{*R\}\}$ & $\{*L,*R\}$\\
    $7+$ &$*L$& $*R$& $\{*L\}$& $\{*R\}$& $*L$& $\{*L, \{*R\}\}$ & $\{*L,*R\}$
  \end{tabular}
\end{table*}
\end{theorem}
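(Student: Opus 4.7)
The plan is to prove by strong induction on $n$ that the single-pile $n$-token position $G_n$ in $S = \{2, 5\}$ {\sc subtraction game} is equivalent to the value $v_n$ listed in the table, where for $n \ge 14$ we set $v_n = v_{n-7}$. Positions $0$ and $1$ admit no legal subtraction and are thus the terminals $\sL$ and $\sR$ respectively, by the parity convention. For $n \ge 2$, $G_n \iso \{G_{n-2}, G_{n-5}\}$ (with the second option dropped if $n < 5$), so by the induction hypothesis and the congruence property ``$G_i \eq H_i$ for all $i$ implies $\{G_1,\dots,G_k\} \eq \{H_1,\dots,H_k\}$'', we have $G_n \eq \{v_{n-2}, v_{n-5}\}$. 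It therefore suffices to check $\{v_{n-2}, v_{n-5}\} \eq v_n$ for $n = 2, 3, \dots, 13$, after which periodicity takes over.

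The nontrivial cases all use Theorem~\ref{thm.bypass} or Theorem~\ref{thm.dominate}. For $n = 4$ the position $\{\{\sL\}\}$ collapses to $\sL$ via Theorem~\ref{thm.bypass} with $G_1 = \{\sL\}$. For $n = 7$ the options $\{\sL, \{\sR\}\}$ and $\{\sL\}$ each contain $\sL$ among their own children, so Theorem~\ref{thm.bypass} with $G_1 = \{\sL\}$ gives $\sL$. For $n = 8$ the $\sR$-version of Theorem~\ref{thm.bypass} with $G_1 = \{\sR\}$ reduces $\{\{\sL, \sR\}, \{\sR\}\}$ to $\sR$. For $n = 10$, Theorem~\ref{thm.dominate} with $G_1 = \sR$ simplifies $\{\sR, \{\sL, \{\sR\}\}\}$ to $\{\sR\}$, the side condition being that the second option contains $\{\sR\} = \{G_1\}$ as a child. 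For $n = 11$, the position $\{\{\sL\}, \{\sL, \sR\}\}$ reduces to $\sL$ via Theorem~\ref{thm.bypass} with $G_1 = \{\sL\}$. For $n = 9$ both options are equivalent to $\sL$, and one concludes by the congruence property together with the elementary identity $\{\sL, \sL\} \eq \{\sL\}$. The remaining cases $n = 2, 3, 5, 6, 12, 13$ need no simplification at all.

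For $n \ge 14$, the periodicity of $v$ gives $v_{n-2} = v_{n-9}$ and $v_{n-5} = v_{n-12}$, which are exactly the option values of $G_{n-7}$. By induction $G_{n-7} \eq v_{n-7} = v_n$, hence $G_n \eq \{v_{n-2}, v_{n-5}\} = \{v_{n-9}, v_{n-12}\} \eq G_{n-7} \eq v_n$. The principal obstacle is the case-by-case simplification at $n = 7, 8, 10, 11$: each requires choosing the correct option as $G_1$ and verifying the hypothesis of the relevant simplification theorem, which in turn demands careful inspection of the recursive option structure one level deeper. A secondary subtlety is the congruence property used throughout; it is standard in combinatorial game theory and follows by a straightforward induction on the birthday, although it is not stated explicitly in the paper.
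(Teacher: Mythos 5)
Your proof is correct and follows essentially the same route as the paper's: induction through the period-7 pattern, computing the options $\{G_{n-2},G_{n-5}\}$ at each residue and collapsing the four nontrivial cases with Theorems \ref{thm.bypass} and \ref{thm.dominate} exactly as the paper does. The only (welcome) difference is that you make explicit the congruence property of $\eq$ under forming options, which the paper uses silently.
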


\begin{proof}
    Let $n = 7k + \ell, k \in \mathbb{Z}_{\geq 0}, \ell = 0, 1, \dots, 6$.
    We proceed by induction on $k$. First, we consider the base case $k = 0$. If $\ell = 0, 1$, then their values are $\sL$ and $\sR$, since they are terminal positions. If $\ell = 2, 3$, then their options are $\sL$, $\sR$, respectively. Thus, their values are $\{\sL\}, \{\sR\}$, respectively. If $\ell = 4$, then its option is $\{\sL\}$. Hence, the value is $\{\{\sL\}\}$, which is reduced to $\sL$ by Theorem  \ref{thm.bypass}.
    If $\ell = 5$, then its options are $\sL$ and $\{\sR\}$. Thus, the value is $\{\sL, \{\sR\}\}$.
    If $\ell = 6$, then its options are $\sL$ and $\sR$, so the value is $\{\sL, \sR\}$.

    We assume that the statement holds until $k = i$, and we show that the statement still holds when $k = i + 1$. We assume that $k = i + 1$. If $\ell = 0$, then its options are $\{\sL, \{\sR\}\}$ and $\{\sL\}$. Thus, the value is $\{\{\sL\}, \{\sL, \{\sR\}\}\}$. By Theorem \ref{thm.bypass}, we have $\sL$. Similarly, if $\ell = 1$, then the value is $\sR$.
    If $\ell = 2$, then both options are $\sL$. Thus, the value is $\{\sL\}$.
    If $\ell = 3$, then its options are $\{\sL, \{\sR\}\}$ and $\sR$, so the value is $\{\sR, \{\sL, \{\sR\}\}\}$. By Theorem \ref{thm.dominate}, we have $\{\sR\}$.
    If $\ell = 4$, then its options are $\{\sL, \sR\}$ and $\{\sL\}$. Thus, the value is $\{\{\sL\}, \{\sL, \sR\}\}$. By Theorem \ref{thm.bypass}, we have $\sL$.
    For cases $\ell = 5$ and $\ell = 6$, their options are the same as for case $k = 0$. Hence, the statement still holds for the case $k = i + 1$.
    \qed
\end{proof}


Similarly, we can show the following statements.

\begin{corollary}\label{cor:4k1}
    Let $k$ be a positive integer and $S = \{2, 4k + 1\}$. Assume that in $G$ there is only one pile whose number of tokens is $n$. The value of $G$ is as follows. The sequence of values exhibits a periodic structure with period $4k + 3$.
    \begin{table*}[h]
    \centering
    \caption{Values when $S = \{2, 4k + 1\} (\ell = 0, \dots, k - 1)$.}
\label{tbl:2_4k1}
  \begin{tabular}{c|ccccccc}
     $n$& $4\ell$& $4\ell + 1$& $4\ell + 2$& $4\ell + 3$& $4k$& $4k + 1$& $4k + 2$  \\ \hline
    $0+$ & $*L$ &$*R$& $\{*L\}$& $\{*R\}$& $*L$& $\{*L, \{*R\}\}$ & $\{*L,*R\}$\\
    $4k + 3+$ &$*L$& $*R$& $\{*L\}$& $\{*R\}$& $*L$& $\{*L, \{*R\}\}$ & $\{*L,*R\}$
  \end{tabular}
\end{table*}
\end{corollary}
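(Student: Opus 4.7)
The plan is to mirror the proof already given for $S=\{2,5\}$, merely widening the case analysis to accommodate the longer period. Write $n = (4k+3)q + \ell$ with $\ell \in \{0,1,\dots,4k+2\}$ and proceed by induction on $q$. The useful bookkeeping fact is how the two subtractions shift residues: subtracting $2$ sends $\ell$ to $\ell-2$ in period $q$ when $\ell \ge 2$, and to $\ell+4k+1$ in period $q-1$ when $\ell\in\{0,1\}$; subtracting $4k+1$ (available only when $n\ge 4k+1$) sends $\ell$ to $\ell+2$ in period $q-1$ when $\ell \le 4k$, and to $\ell-(4k+1)$ in period $q$ when $\ell\in\{4k+1,4k+2\}$.

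For the base case $q=0$, only the subtraction of $2$ is available when $\ell\le 4k$, and the position $n=\ell$ is terminal for $\ell\in\{0,1\}$. A short induction on $\ell$ then reproduces the cycle $\sL,\sR,\{\sL\},\{\sR\}$: at $\ell=4$ the raw value $\{\{\sL\}\}$ collapses to $\sL$ by Theorem \ref{thm.bypass}, at $\ell=5$ the raw value $\{\{\sR\}\}$ collapses to $\sR$ by the conjugate of the same theorem, and the cycle repeats until $\ell=4k$, which again gives $\sL$ after bypass. The two exceptional residues $\ell=4k+1$ and $\ell=4k+2$ are the first cases where subtracting $4k+1$ is legal; their options are $\{\sL,\{\sR\}\}$ and $\{\sL,\sR\}$ respectively, matching the claim with no further simplification.

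For the inductive step, assume the statement for periods $q-1$ and for residues smaller than $\ell$ in period $q$. The residues split into the same four patterns seen in the $k=1$ proof. For $\ell=0$ the options have values $\{\sL\}$ (from subtracting $4k+1$, reaching residue $2$ in period $q-1$) and $\{\sL,\{\sR\}\}$ (from subtracting $2$, reaching residue $4k+1$ in period $q-1$); since the second option has $\sL$ among its options, Theorem \ref{thm.bypass} reduces the whole position to $\sL$. Residue $\ell=1$ is symmetric, yielding $\sR$. Residues $\ell=4j$ and $\ell=4j+1$ with $1\le j\le k-1$ have two option values both of which are $\sL$ (respectively $\sR$), collapsing to $\{\sL\}$'s predecessor in the cycle; residues $\ell=4j+2$ and $\ell=4j+3$ present their options $\sL$ (respectively $\sR$) plus a ``previous period'' value already computed, and Theorem \ref{thm.dominate} or \ref{thm.bypass} trims the second option. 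The boundary residues $\ell=4k,4k+1,4k+2$ are handled exactly as in the displayed proof for $S=\{2,5\}$, with $\{\sL\}$ bypassed to $\sL$ at $\ell=4k$, and the options at $\ell=4k+1,4k+2$ matching verbatim those computed in period $q=0$.

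The main tedium, and the only real obstacle, is the case enumeration for general $k$: for each $k$ one must verify that the option values given by the shifting rules above really do equal the corresponding entries in the claimed table, so that simplifications via Theorem \ref{thm.bypass} and Theorem \ref{thm.dominate} apply. Because the pattern for $\ell<4k$ is the simple $4$-cycle $\sL,\sR,\{\sL\},\{\sR\}$, and the three exceptional residues are computed in the same way as in the $k=1$ case, no new simplification identity is needed; every collapse reduces to one of the two instances $\{\{\sL\}\}\equiv\sL$ or $\{\sR,\{\sL,\{\sR\}\}\}\equiv\{\sR\}$ (and their conjugates) already used.
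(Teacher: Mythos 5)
Your overall strategy---write $n=(4k+3)q+\ell$, induct on $q$ and then on $\ell$, and rerun the $S=\{2,5\}$ argument using Theorems \ref{thm.bypass} and \ref{thm.dominate}---is exactly what the paper intends (the paper proves only the case $k=1$ and asserts the general case ``similarly''), and your base case $q=0$ as well as the residues $\ell=0,1$ and $\ell=4k,4k+1,4k+2$ are handled correctly. However, your enumeration of the generic middle residues is wrong as written, and following it literally yields values that contradict the table. For $1\le j\le k-1$ and $q\ge 1$, the position at residue $4j$ has its two options at residue $4j-2$ of period $q$ and residue $4j+2$ of period $q-1$, so \emph{both} option values are $\{\sL\}$, not $\sL$; the raw value is $\{\{\sL\}\}$, and it is Theorem \ref{thm.bypass} that collapses it to the tabled $\sL$ (dually at $4j+1$, both options are $\{\sR\}$ and bypass gives $\sR$). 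Conversely, at residues $\ell\equiv 2\pmod 4$ with $\ell\le 4k-2$ and $\ell\equiv 3\pmod 4$ with $\ell\le 4k-5$, both options are $\sL$ (resp.\ $\sR$), so the value is directly $\{\sL\}$ (resp.\ $\{\sR\}$) and no simplification theorem is needed; the only place Theorem \ref{thm.dominate} enters is the single residue $\ell=4k-1$, whose options are $\sR$ and the previous-period value $\{\sL,\{\sR\}\}$ at residue $4k+1$, giving $\{\sR,\{\sL,\{\sR\}\}\}\eq\{\sR\}$ exactly as in the $\ell=3$ step of the $k=1$ proof. Your sentence assigning option values $\sL$/$\sR$ to residues $4j,4j+1$ and speaking of ``$\{\sL\}$'s predecessor in the cycle'' is therefore incorrect as stated: if both options really were $\sL$, the value would be $\{\sL\}$, not $\sL$. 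The two families of residues have simply been swapped.

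Once this bookkeeping is repaired the induction closes, because every collapse used is one of $\{\{\sL\}\}\eq\sL$, $\{\{\sL\},\{\sL,\{\sR\}\}\}\eq\sL$, $\{\{\sL\},\{\sL,\sR\}\}\eq\sL$ (Theorem \ref{thm.bypass}) or $\{\sR,\{\sL,\{\sR\}\}\}\eq\{\sR\}$ (Theorem \ref{thm.dominate}), together with their conjugates; and the residues $4k+1,4k+2$ of period $q\ge 1$, whose options lie at residues $4k-1,0$ and $4k,1$ of the \emph{same} period, are covered because your induction is on $q$ with an inner induction on $\ell$, as you note. So the approach is the right one and matches the paper's; only the middle-of-cycle case analysis needs to be corrected as above before the argument is sound.
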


\begin{corollary}\label{cor:4k3}
    Let $k$ be a positive integer and $S = \{2, 4k + 3\}$. Assume that in $G$ there is only one pile whose number of tokens is $n$. The value of $G$ is as follows. The sequence of values exhibits a periodic structure with period $4k + 5$.
    \begin{table*}[h]
\centering
\caption{Values when $S = \{2, 4k + 3\}(\ell = 0, \dots, k - 1).$}
\label{tbl:2_4k3}
  \begin{tabular}{c|ccccccccc}
     $n$& $4\ell$& $4\ell + 1$& $4\ell + 2$& $4\ell + 3$& $4k$& $4k + 1$& $4k + 2$ & $4k + 3$ & $4k + 4$ \\ \hline
    $0+$ & $*L$ &$*R$& $\{*L\}$& $\{*R\}$& $*L$&$*R$& $\{*L\}$ & $\{*L,*R\}$& $\{*R, \{*L\}\}$\\
    $4k + 5+$ &$*L$& $*R$& $\{*L\}$& $\{*R\}$& $*L$& $*R$& $\{*L\}$& $\{*L,*R\}$& $\{*R, \{*L\}\}$
  \end{tabular}
\end{table*}
\end{corollary}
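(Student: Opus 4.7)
The plan is to prove the statement by strong induction on the pile size $n$, following the same template as the $S = \{2,5\}$ theorem and Corollary \ref{cor:4k1}. Writing $V(n)$ for the value of a single pile of $n$ tokens, it suffices to verify the table entries for $0 \le n \le 4k+4$ directly and then to check, for each residue $r$ modulo $4k+5$ and each $n \ge 4k+5$ with $n \equiv r \pmod{4k+5}$, that $V(n) \eq \{V(n-2),V(n-(4k+3))\}$ simplifies to the table entry $T(r)$.

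The base cases $0 \le n \le 4k+4$ are direct. For $n < 4k+3$ only the move ``subtract $2$'' is legal, so $V(n)$ is the singleton $\{V(n-2)\}$, and successive applications of Theorem \ref{thm.bypass} collapse nested singletons to reproduce the period-$4$ pattern $\sL,\sR,\{\sL\},\{\sR\}$. At $n = 4k+3$ the move ``subtract $4k+3$'' first becomes legal and contributes $V(0) = \sL$ to the option set together with $V(4k+1) = \sR$, yielding the anomalous value $\{\sL,\sR\}$. At $n = 4k+4$ the options are $V(4k+2) = \{\sL\}$ and $V(1) = \sR$, giving $\{\sR,\{\sL\}\}$.

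For the inductive step, assume $n \ge 4k+5$. By the inductive hypothesis both $V(n-2)$ and $V(n-(4k+3))$ match the table, and their residues modulo $4k+5$ are $r_1 = (r-2) \bmod (4k+5)$ and $r_2 = (r+2) \bmod (4k+5)$. For residues $r$ such that neither $r_1$ nor $r_2$ equals $4k+3$ or $4k+4$, both options fall in the standard pattern $\{\sL,\sR,\{\sL\},\{\sR\}\}$, and the resulting option set either coincides with $T(r)$ directly or is a doubly-nested singleton that collapses to $\sL$ or $\sR$ by Theorem \ref{thm.bypass}. The main obstacle is the four boundary residues $r \in \{0,1,4k+1,4k+2\}$ where exactly one option picks up an anomalous value. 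For $r = 0$ the position $\{\{\sL\},\{\sL,\sR\}\}$ reduces to $\sL$ by Theorem \ref{thm.bypass} (with $G_1 = \{\sL\}$ and the second option $\{\sL,\sR\}$ containing $\sL$ as a sub-option); for $r = 1$, $\{\{\sR\},\{\sR,\{\sL\}\}\}$ reduces to $\sR$ by the symmetric version of bypass; for $r = 4k+1$, $\{\{\sR\},\{\sL,\sR\}\}$ reduces to $\sR$ again by bypass; and for $r = 4k+2$ the position $\{\sL,\{\sR,\{\sL\}\}\}$ is not a bypass configuration, so Theorem \ref{thm.dominate} is applied with $G_1 = \sL$, using that the second option $\{\sR,\{\sL\}\}$ contains $\{\sL\} \eq \{G_1\}$ as an option, to conclude $V(n) \eq \{\sL\}$. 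The remaining residues $r = 4k+3$ and $r = 4k+4$ reproduce exactly the option sets of their base cases and need no further simplification.
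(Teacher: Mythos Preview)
Your proof is correct and follows exactly the approach the paper intends: the paper states this corollary immediately after the $S=\{2,5\}$ theorem with only ``Similarly, we can show the following statements,'' and your induction on $n$ with a case split on residues modulo $4k+5$, using Theorem~\ref{thm.bypass} for the collapses at $r\in\{0,1,4k+1\}$ and Theorem~\ref{thm.dominate} for $r=4k+2$, is precisely the omitted verification. The identification of the four boundary residues via $r_1=r-2$ and $r_2=r+2$ modulo $4k+5$, and the observation that $r=4k+3,4k+4$ reproduce their base option sets verbatim, are all accurate.
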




\begin{remark}
    Using results in Corollary \ref{cor:4k1} and Corollary \ref{cor:4k3}, together with the $\LR$-quotient, we can compute the outcomes of multiple plies of the $\LR$-ending partisan rulesets with mixed $S_k = \{2, 2k + 3\}$ for $k \in \mathbb{Z}_{\geq 1}$, varying depending on the piles. For details, see \cite{HI}.
\end{remark}

\section{Open problems}
\label{sec:open}
In this paper, we introduced the concept of $\LR$-ending partisan rulesets and showed basic theorems,
as well as some values and their construction, which are extremely useful for analyzing $\LR$-ending partisan rulesets. Finally, we present several remaining open problems worthy of further investigation:

\begin{itemize}
    \item Considering other frameworks of ending partisan rulesets: Impartial games and $\LR$-ending partisan rulesets are only special cases of ending partisan rulesets and we can consider other frameworks. For example, the sets of options for both players are the same, and there are two kinds of terminal positions; If the game ends in one of the terminal positions, the next player of the turn wins and if the game ends in the other terminal position, the previous player wins.
    \item Characterizing other notable values: In Section \ref{sec:values}, we characterize three kinds of values. We can expect that there are other values with good structures, and it is meaningful to find them.    
    \item Determining that whether every position has a canonical form:
    In partisan games under the normal play convention, it is known that from any position, if the two simplification
     methods, ``removing dominated options'' and ``bypassing reversible options'' are applied as many times as possible, one can reach a unique simplest position, which is called the canonical form of the original position. Is the same thing happen in $\LR$-ending partisan rulesets by using Theorems \ref{thm.bypass} and \ref{thm.dominate}? As a related question, we should also  consider whether there are other simplification methods than Theorems \ref{thm.bypass} and \ref{thm.dominate}.
\end{itemize}



\end{document}